\newtheorem{theorem}{Theorem}[section]
\newtheorem{lemma}[theorem]{Lemma}
\newtheorem{corollary}[theorem]{Corollary}
\newtheorem{definition}{Definition}[section]
\newcommand\restr[2]{{
  \left.\kern-\nulldelimiterspace 
  #1 
  \vphantom{\big|} 
  \right|_{#2} 
  }}
\begin{document}
\title[Equivalent extensions of HJB equations on hypersurfaces]{Equivalent extensions of Hamilton-Jacobi-Bellman equations on hypersurfaces}

\author{Lindsay Martin}\thanks{Department of Mathematics, The University of Texas at Austin, TX 78712, USA. E-mail: lmartin@math.utexas.edu}
\author{Richard Tsai} \thanks{Department of Mathematics and Oden Institute for Computational Engineering and Sciences, The University of Texas at Austin, TX 78712, USA. E-mail: ytsai@math.utexas.edu}
\subjclass[2010]{Primary 70H20,49J20, 35R01 58E10, 65N06}

\maketitle
\begin{abstract}
 We present a new formulation for the computation of solutions of a class of Hamilton Jacobi Bellman (HJB) equations on closed smooth surfaces of co-dimension one. 
 For the class of equations considered in this paper,
 the viscosity solution of the HJB equation is equivalent to the value function of a corresponding optimal control problem. 
 In this work, we extend the optimal control problem given on the surface to an equivalent one defined in a sufficiently thin narrow band of the co-dimensional one surface. The extension is done appropriately so that the corresponding HJB equation, in the narrow band, has a unique viscosity solution which is identical to the constant normal extension of the value function of the original optimal control problem. With this framework,  one can easily use existing (high order) numerical methods developed on Cartesian grids to solve HJB equations on surfaces, with a computational cost that scales with the dimension of the surfaces.
 This framework also provides a systematic way for solving HJB equations on the unstructured point clouds that are sampled from the surface.
\end{abstract}
\section{Introduction}\label{sec:CPmap}

Hamilton-Jacobi-Bellman equations have many applications in optimal control, seismology, geometrical optics, etc.  From the solutions of HJB equations on surfaces, the corresponding characteristics curves can be extracted and used in many applications. Some examples include mesh generation \cite{HochRascle2002}, path planning \cite{KumarVladimirsky2010,Tsitsiklis95}, and brain mapping \cite{AlbertoSapiro2001, Lengletetal2009}. 
The equations are fully nonlinear and classical solutions typically do not exist. The unique viscosity solution \cite{CrandallLions83} is often sought after.
Sophisticated algorithms have been developed for computing the viscosity solution of HJB equations defined in Euclidean space.

Let $\Omega \subset \mathbb{R}^n$ where $n=2$ or $3$ be a bounded and connected open set with smooth closed boundary $\Gamma=\partial \Omega$. 
Our goal is to compute solutions of the following HJB equation defined on smooth surfaces, with given Dirichlet boundary conditions:
\begin{align}\label{surfHam}
\min_{\mathbf{a}\in A_\mathbf{x}} \bigg\{ r(\mathbf{x},\mathbf{a})+\nabla_\Gamma u(\mathbf{x}) \cdot {f}(\mathbf{x},\mathbf{a})\mathbf{a} \bigg \}&=0, \; \; \; \mathbf{x} \in \Gamma\backslash \mathcal{T} \\
u(\mathbf{x})&=g(\mathbf{x}), \; \; \mathbf{x} \in \mathcal{T}.\label{surfHambc}
\end{align}
The set $A_\mathbf{x}:=S^{n-1} \cap T_\mathbf{x}\Gamma$ is a compact set where $S^{n-1}$ is the $(n-1)$-dimensional unit sphere in $\mathbb{R}^n$ and $T_\mathbf{x}\Gamma$ is the tangent space of $\Gamma$ at the point $\mathbf{x} \in \Gamma$. We define ${A:=S^{n-1}\cap TM(\Gamma)}$ to be the unit tangent bundle of $\Gamma$ where $TM(\Gamma)$ is the tangent bundle of $\Gamma$. Here $r,f:\Gamma \times A \to \mathbb{R}$, $\mathcal{T}$ is a closed subset of $\Gamma$, and $\nabla_\Gamma u$ is the surface gradient on $\Gamma$ \cite{Deckelnick2019}. 

For $\epsilon>0$, define the narrow band of $\Gamma $ by
$$T_\epsilon : = \{\mathbf{z} \in \mathbb{R}^n : \min_{\mathbf{x} \in \Gamma} ||\mathbf{x}-\mathbf{z}||)< \epsilon\}.$$
In this paper, we shall derive a Hamiltonian, $\overline{H}$, and extensions $\overline{\mathcal{T}}$ and $\overline{g}$ of $\mathcal{T}$ and $g$, respectively, such that the viscosity solution to 
\begin{align}\label{extendHam}
\overline{H}(\mathbf{z},\nabla v (\mathbf{z}))&=0, \; \; \; \mathbf{z} \in T_\epsilon \backslash \overline{\mathcal{T}} \\
v(\mathbf{z})&=\overline{g}(\mathbf{z}), \; \; \mathbf{z} \in \overline{\mathcal{T}}
\end{align}
is the constant normal extension of the solution to \eqref{surfHam}-\eqref{surfHambc}, for any positive and sufficiently small $\epsilon$.

Our contribution includes a theory for how optional control problems on surfaces can be extended into ``equivalent" ones defined in a narrow band of the surface. Depending on whether the problem is isotropic or anisotropic, we must take careful consideration of extending the control space. We show for the anisotropic or most general case, the same control space used in the surface problem must be used in defining the extended problem. However, in the isotropic case the extension is equivalent even when we appropriately extend the control space. After defining the extension of optimal control problems, we then extend the corresponding HJB equation defined on the narrow band of the surface. The main advantage of this approach is that to compute solutions of HJB equations on surfaces, we are able to use Cartesian grids and existing methods (including high order methods) which solve HJB equations in Euclidean space. This allows us to avoid unnecessary patching and triangulation for solving the surface HJB equation. We show that in fact the narrow band can be very thin, i.e., it's thickness is of order $h$, where $h$ is the grid size.

Before proceeding, we will first define the closest point mapping that is used in the extensions. Define the closest point mapping, $P_\Gamma: T_\epsilon \to \Gamma$, by
\begin{equation*}
P_{\Gamma}(\mathbf{z}) := \arg \min_{\mathbf{y} \in \Gamma} ||\mathbf{z}-\mathbf{y}||.
\end{equation*}  Here, $\epsilon$ must be chosen so that the closest point mapping is unique.
Then we define the signed distance function to $\Gamma$, $d_\Gamma:T_\epsilon \to \mathbb{R}$,  by 
\begin{equation*}
d_{\Gamma}(\mathbf{z})= \begin{cases}  ||\mathbf{z}-P_{\Gamma}\mathbf{z}|| \; \; &\mbox{if } \mathbf{z} \in \overline{\Omega}^c.
\\ -||\mathbf{z}-P_{\Gamma}\mathbf{z}|| \; \; &\mbox{if } \mathbf{z} \in \Omega
\end{cases}
\end{equation*}
For $|\eta|<\epsilon$, we will define the parallel surface, $\Gamma_\eta$, by 
$$\Gamma_\eta:=\{\mathbf{z} \in T_\epsilon \; | \; d_\Gamma(\mathbf{z})=\eta\}.$$
Closest point mappings are easily derived in the context of level set methods \cite{OsherSethian88}, and there are a variety of fast and high order methods available to compute them from the distance function to $\Gamma$ \cite{Tsitsiklis95, Sethian96, Chengetal02, Tsaietal03, Zhangetal06}, i.e., 
$$P_\Gamma(\mathbf{z})=\mathbf{z}-d_\Gamma(\mathbf{z}) \nabla d_\Gamma(\mathbf{z}).$$

Finally, we define the constant normal extension of function on $\Gamma$, $u:\Gamma \to \mathbb{R}$, to be $\overline{u}:T_\epsilon \to \mathbb{R}$ given by $$\overline{u}(\mathbf{z}):=u(P_\Gamma \mathbf{z}).$$ Again, our goal is to define an extended HJB equation on $T_\epsilon$  
so that the solution is the constant normal extension to the solution of the HJB equation on the surface.

The rest of the paper is organized as follows: In Section \ref{sec:HJBman}, we present the optimal control problem which models controlled motion on a surface and define the related class of HJB equations. We then extend the  control problem and HJB equations on the narrow band, $T_\epsilon$. In Section \ref{sec:Eikonalcase}, we present a special case of the problem when the HJB equation reduces to the Eikonal equation and the distance function on the surface is desired. Finally, we give some numerical simulations in Section \ref{sec:sim}.

\subsection{Previous work: solving HJB equations in Euclidean space} \label{sec:prevworkEuc}

There is extensive work on developing numerical methods to compute solutions to \eqref{surfHam} where the domain is a bounded open set in $\mathbb{R}^n$. 
In the view of a optimal control problem, one can obtain a semi-Lagrangian discretization for HJB equations which gives a large system of coupled nonlinear equations. Extensive studies of semi-Lagrangian techniques are found in \cite{Falcone87,BardiFalcone90, Falcone94,FalconeFerretti94}. Solving the discretized system using fixed-point iterations is expensive, thus fast marching methods (FMMs) and fast sweeping methods (FSMs) were developed.  

FMMs are a variant of Djikstra's method and use a heapsort algorithm to determine the order in which the grid nodes are updated. The algorithm was first developed on Cartesian grids \cite{Tsitsiklis95,Sethian96} to solve Eikonal equations which are a special case of \eqref{surfHam} when the functions $r$ and $f$ are isotropic, i.e., $r(\mathbf{x},\mathbf{a})=r(\mathbf{x})$ and $f(\mathbf{x},\mathbf{a})=f(\mathbf{x})$. The solution to the Eikonal equation is well known to be the distance function to the set $\mathcal{T}$. FMMs have complexity $\mathcal{O}(N \log N)$, where $N$ is the total number of unknowns \cite{RouyTourin92, Tsitsiklis95}.  Based on the  Godunov upwind numerical scheme developed to solve Eikonal equations on triangulated domains in \cite{BarthSethian98},  FMMs were extended to acute triangulated meshes in \cite{KimmelSethian98} . However, FMMs do not directly apply to the anisotropic case. FMMs were applied to compute solutions of a class of ``axis-aligned'' anisotropic HJ equations in \cite{AltonMitchell09} on orthogonal grids, and ordered upwind methods (OUMs) were developed to solve a class of general HJB equations on any triangulated surface \cite{SethianVladimirsky03}. 
In \cite{Carlinietal06,Carlinietal08}, related PDEs are solved using similar fast methods.

FSMs were developed in \cite{Tsaietal03, Zhao05} to compute solutions of a class of strictly convex HJ equations, including Eikonal equation, on Cartesian grids. 
The FSM is an iterative method that relies on an upwind discretization of the PDE and Gauss-Seidel style updates with different orderings of grid nodes.
The idea is to avoid using heapsort and, instead, update the grid nodes in different orderings (sweeps). In each sweep, characteristics that go in similar directions are propagated automatically. In \cite{LiShuetal08}, a FSM was developed for Eikonal equations using a discontinuous Galerkin
(DG) local solver for computing the distance function. 
For more general Hamilton-Jacobi equations, one may use  a Lax-Friedrichs type numerical Hamiltonian \cite{Kaoetal04} on Cartesian grids. The resulting Lax-Friedrichs sweeping scheme is typically more diffusive and requires more iterations. 
FSMs have complexity $\mathcal{O}(N)$ with the caveat that the constant hidden in the notation may be large depending on the characteristics of the PDE.

\subsection{Previous work: solving HJB equations on surfaces}\label{sec:prevworkman} 

Several versions of FMMs and FSMs can be applied to computing the distance function on surfaces. 
As already mentioned, the FMM in \cite{KimmelSethian98} only works for solving Eikonal equations on acute triangulated surfaces. In \cite{SpiraKimmel04}, FMMs equation were extended to solve Eikonal equations on parametric surfaces  where the discretization is done on a Cartesian grid in the parametric plane.  The FSM in \cite{Tsaietal03} and the FMM in \cite{Mirebeau2014}
can compute the distance function on surfaces that are defined as the graph of a smooth function.  Another approach to computing the distance function on surfaces is to solve a corresponding evolutionary HJ equation where the distance function is the steady state solution \cite{Chengetal02}. However, with the addition of the time variable this method is not computationally optimal.

OUMs are applicable to more general anisotropic HJB equations on triangulated surfaces.  
However, if the surface is given in implicit form, triangulation may be unnecessary. We want to avoid triangulation and have the ability to handle general surface representations. Our motivation is to derive equivalent extended HJB equations on a narrow band of the surface such that a variety of meshes and methods can be used to solve the equivalent equations in the narrow band. In particular, the methods developed for Cartesian grids (surveyed in Section \ref{sec:prevworkEuc}) can then be used. 

In \cite{MemoliSapiro01}, the idea of extending the surface by a small offset, $\epsilon$, to a narrow band is used to compute distance functions on surface. On the narrow band, an extended distance function is computed, but it is not the constant normal extension of the distance function on the surface. Therefore, the formulation introduces an analytical error. It is proven that the error between the two distance functions is controlled by $\epsilon$, the width of the narrow band. In order for the method to converge as the grid size goes to zero, it is required that $\epsilon=Ch^\gamma$ where $h$ is the grid size and $\gamma \in (0,1).$ The advantage of our framework is that there no analytical error introduced. Our formulation is convergent with respect to the grid size, $h$, and for narrow bands of order $h$. We are also able to perform high order computations for the distance function on surfaces. 

The idea of extending functions or differential operators 
defined on surfaces, via closest point mapping, to the embedding Euclidean space, is used in \cite{RuuthMerriman08, MacdonaldRuuth09, ChuTsai18} to compute solutions to parabolic and elliptic PDEs. Our approach is inspired by the work in \cite{ChuTsai18} where variational integrals defined originally on surfaces are extended to ones defined in the narrow band, and the Euler-Lagrange equations of the extended problem are solved by standard numerical methods. Due to the way the extensions are defined, the resulting solution to each equation is automatically the constant normal extension of the solution to the variational problem on the surface.  


\subsection{Computing PDEs ``on point clouds"}
Point clouds arise in many applications including facial recognition, manufacturing, medicine, and geosciences and can be acquired easily through modern sensing devices such as laser scanners and cell phones. 
A typical approach, particularly predominant in the computer graphics community,  is to create triangular meshes from the data points and seek to define partial derivatives in certain suitable ways. Among myriad of papers using triangular surface meshes, we mention \cite{BarthSethian98}, an early paper which is the most related to the topic of this paper. 
Another school of approaches aim at solving differential equations on surfaces, using only a finite set of points sampled from the surfaces, without globally reconstructing the surface. 

In \cite{MemoliSapiro05}, the algorithm developed in \cite{MemoliSapiro01} is extended to compute distance functions on surfaces represented as point clouds. In \cite{LiangZhao2013}, a framework for computing solutions to PDEs on point clouds based on a local approximation of the manifold was presented.  A local mesh algorithm was developed in \cite{Laieta2013} that solves PDEs on point clouds where any of the existing methods valid on triangular meshes discussed in Section \ref{sec:prevworkman} can be used to compute the solutions to HJB equations once the local mesh is determined. 

Our new proposed non-parametric formulation also 
provides a convenient way to solve equations ``on point clouds." That is we can solve HJB equations which are defined on a smooth closed surface, but the only available information about the surface is a finite set of points that are reasonably distributed over the surface.  
The discussion of the implementation of our method applied to point clouds is in Section \ref{sec:PointCloud}. There, it is assumed that there is no noise perturbing the point cloud in the surface normal directions, and that the point clouds are evenly distributed over the surfaces.
The generalization of the proposed algorithm to more general point clouds
is the subject of another paper.

\section{Static Hamilton-Jacobi-Bellman equations on surfaces}\label{sec:HJBman}

\subsection{Modeling controlled motion on a surface}\label{Gamma}
First, we define the optimal control problem from which the HJB equations can be derived. For each $\mathbf{x} \in \Gamma$, let $A_\mathbf{x}$ be set of all unit tangent vectors of $\Gamma$ at $\mathbf{x}$, i.e., $$A_\mathbf{x}=S^{n-1} \cap T_\mathbf{x}\Gamma.$$ Note each $A_\mathbf{x}$ is a compact set in $\mathbb{R}^n$ for each $\mathbf{x} \in \Gamma$. Define $A:=\bigcup_{\mathbf{x}\in \Gamma} A_\mathbf{x}$, then $A=S^{n-1}\cap TM(\Gamma)$ where $TM(\Gamma)$ is the tangent bundle of $\Gamma$.  The set of admissible controls is given by: 
$$\mathcal{A}=\{ \mbox{measurable functions } \mathbf{a}:[0,\infty) \to A\}.$$ We are interested in the trajectories governed by the dynamical system:
\begin{align}\label{dynsys0}
\frac{d\mathbf{y}}{dt}(t)&={f}(\mathbf{y}(t),\mathbf{a}(t))\mathbf{a}(t), \; t>0 \\  \label{dynbc0} \mathbf{y}(0)&=\mathbf{x},
\end{align}
where ${f}:\Gamma \times A \to \mathbb{R}$ represents the velocity and $\mathbf{a}(t) \in A_{\mathbf{y}(t)}$, which ensures that $\mathbf{y}(t) \in \Gamma$ for all $t\geq0$. Denote the solution to \eqref{dynsys0}-\eqref{dynbc0} (which exists under the assumption \eqref{eq:A1} below) by $\mathbf{y}_\mathbf{x}(t,\mathbf{a}(t))$. For each $\mathbf{x} \in \Gamma$ define the following subset of the admissible controls:

$$\mathcal{A}_\mathbf{x}:=\{\mathbf{a}(\cdot) \in \mathcal{A}  \; | \; \mathbf{y}_\mathbf{x}(t,\mathbf{a}(t)) \in \Gamma \mbox{ for all } t\geq0\}.$$ Then the requirement that $\mathbf{a}(t) \in A_{\mathbf{y}(t)}$ for all $t \geq 0$ in the dynamical system \eqref{dynsys0}-\eqref{dynbc0} is equivalent to $\mathbf{a}(\cdot)\in \mathcal{A}_\mathbf{x}.$ Refer to $\mathbf{y}(\cdot)$ that satisfy (\ref{dynsys0})-(\ref{dynbc0}) where $\mathbf{a}(\cdot) \in \mathcal{A}_\mathbf{x}$ as an admissible path on $\Gamma$. For the rest of the paper, we will suppress writing the dependence of $\mathbf{y}_\mathbf{x}(t, \mathbf{a}(t))$ on $\mathbf{a}(\cdot)$ when there is no confusion in the context. 
 
 Let $r : \Gamma \times A  \to \mathbb{R}$ denote a running cost per unit time and $g:\mathcal{T} \to \mathbb{R}$ be an exit time penalty when the state reaches a closed target set $\mathcal{T} \subset \Gamma$. The total cost associated with initial state $\mathbf{x}$ and control $\mathbf{a}(\cdot)\in \mathcal{A}_\mathbf{x}$ to reach $\mathcal{T}$ is given by 
\begin{equation}
C(\mathbf{x},\mathbf{a}(\cdot))=\int_0^T r(\mathbf{y}_\mathbf{x}(t),\mathbf{a}(t))dt +g(\mathbf{y}_\mathbf{x}(T)),
\end{equation}
where $T=\min \{ t \; | \; \mathbf{y}_\mathbf{x}(t) \in \mathcal{T}\}.$

We have the following assumptions on $f,r,$ and $g$:
\begin{equation}\label{eq:A1}\begin{cases} f \mbox{ and } r \mbox{ are Lipschitz continuous in their first arguments}. \\
g \mbox{ is lower semicontinuous and } \min_\mathcal{T} g < \infty.\\
\mbox{There exists constants } R_1,R_2,F_1, \mbox{ and } F_2 \mbox{ such that: } \\ 0<R_1\leq r(\mathbf{x},\mathbf{a}) \leq R_2,  \; \mbox{for all } \mathbf{x} \in \Gamma \mbox{ and } \mathbf{a} \in A, \\ 0<F_1\leq f(\mathbf{x},\mathbf{a}) \leq F_2, \; \mbox{for all } \mathbf{x} \in \Gamma \mbox{ and } \mathbf{a} \in A. \tag{A1}
\end{cases}\end{equation} The value function $u: \Gamma \to \mathbb{R}$ is defined to be the minimal total cost to $\mathcal{T}$ starting at $\mathbf{x}$ and is given by 
\begin{equation}\label{valfunman}
u(\mathbf{x})=\inf_{\mathbf{a}(\cdot) \in \mathcal{A}_\mathbf{x}} C(\mathbf{x}, \mathbf{a}(\cdot)).
\end{equation}

Under the assumption \eqref{eq:A1}, an optimal control does not necessarily exist. If for the given $f$ and $r$, we have that the set $V(\mathbf{x})=\{f(\mathbf{x},\mathbf{a})\mathbf{a}/r(\mathbf{x}, \mathbf{a}) \; | \; \mathbf{a} \in A_\mathbf{x}\}$ is strictly convex for each $\mathbf{x} \in \Gamma$, then an optimal control is guaranteed \cite{BardiDolcetta97}. This property is trivially satisfied in the case of isotropic $f$ and $r$. Regardless of whether an optimal control exists, we can still derive the corresponding HJB equation. 

The dynamic programming principle \cite{BardiDolcetta97} for the value function states that for sufficiently small $\tau >0$,
\begin{equation} \label{bellman0}
u(\mathbf{x})=\inf_{\mathbf{a}(\cdot) \in \mathcal{A}_\mathbf{x}} \bigg \{ \int_0^\tau r(\mathbf{y}_\mathbf{x}(t),\mathbf{a}(t))dt+u(\mathbf{y}_\mathbf{x}(\tau)) \bigg \}.
\end{equation}

The corresponding Hamilton-Jacobi-Bellman equation on $\Gamma$ is
\begin{equation} \label{HJBman0}
\min_{\mathbf{a}\in A_\mathbf{x}} \bigg\{ r(\mathbf{x},\mathbf{a})+\nabla_\Gamma u(\mathbf{x}) \cdot {f}(\mathbf{x},\mathbf{a})\mathbf{a} \bigg \}=0, \; \mathbf{x} \in \Gamma\backslash \mathcal{T}.
\end{equation}
The boundary condition is 
\begin{equation}\label{HJBmanbc0} u(\mathbf{x})=g(\mathbf{x}), \; \; \mathbf{x}\in \mathcal{T}.
\end{equation}
Note that in \eqref{HJBman0} we can take the minimum since $A_\mathbf{x}$ is compact.

In general, classical solutions of \eqref{HJBman0}-\eqref{HJBmanbc0} do not exist, and the unique viscosity solution is sought after \cite{CrandallLions83}. A detailed discussion of viscosity solutions of Hamilton-Jacobi equations on manifolds can be found in \cite{Mantegazza2003}. Under the assumption (A1), it is a classic result that the value function $u$ coincides with the unique viscosity solution of \eqref{HJBman0}-\eqref{HJBmanbc0} \cite{BardiDolcetta97}.
In the case of isotropic running cost and speed, then \eqref{HJBman0} reduces to the Eikonal equation on the surface:
\begin{equation}\label{Eikman0}
||\nabla_\Gamma u(\mathbf{x})||=\frac{r(\mathbf{x})}{f(\mathbf{x})}.
\end{equation}
We will expand more on the Eikonal equation in the next section.

\subsection{Extension to $T_\epsilon$}\label{sec:extTeps}
We derive a Hamiltonian, $\overline{H}$, on $T_\epsilon$ such that if $v: T_\epsilon \to \mathbb{R}$ is the unique viscosity solution of $$\overline{H}(\mathbf{z},\nabla v(\mathbf{z}))=0$$ 
with boundary conditions defined appropriately, then $$v(\mathbf{z})=\overline{u}(\mathbf{z}):=u(P_\Gamma \mathbf{z}),$$ 
where $u$ is the viscosity solution to \eqref{HJBman0}-\eqref{HJBmanbc0}.

First, we extend the optimal control problem on the surface to one in the narrow band, $T_\epsilon$. We start by defining a tensor that will be essential in formulating an equivalent optimal control problem on $T_\epsilon$.

\begin{definition}\label{defn:Btensor}
Let $\mathbf{t}_1(\mathbf{z})$ and $\mathbf{t}_2(\mathbf{z})$ be two orthonormal tangent vectors corresponding to the directions that yield the principle curvatures of $\Gamma_\eta$ at a point $\mathbf{z}$. Let $\mathbf{n(\mathbf{z})}$ be the unit normal vector to the tangent plane at $\mathbf{z}$. Let $\sigma_1(\mathbf{z})$ and $\sigma_2(\mathbf{z})$ be the singular values of $P_\Gamma'(\mathbf{z})$, the Jacobian matrix of $P_\Gamma$ at $\mathbf{z}$. 
Then for any real number $\mu$, define the tensor $B$ by
\begin{equation}B(\mathbf{z},\mu)=B_0(\mathbf{z})+\mu B_1(\mathbf{z}), \end{equation}
where 
\begin{align*}
B_0(\mathbf{z})&=\sigma_1^{-1}(\mathbf{z})\mathbf{t}_1(\mathbf{z}) \otimes \mathbf{t}_1(\mathbf{z}) +\sigma_2^{-1}(\mathbf{z}) \mathbf{t}_2(\mathbf{z}) \otimes \mathbf{t}_2(\mathbf{z}),\\
B_1(\mathbf{z})&=\mathbf{n}(\mathbf{z})\otimes \mathbf{n}(\mathbf{z}).
\end{align*} 
\end{definition} 
In the above definition, $B_0$ corresponds to a weighted orthogonal projection onto the tangent plane of $\Gamma_\eta$ at $\mathbf{z}$, and $B_1$ is the projection along the normal of $\Gamma_\eta$ passing through $\mathbf{z}$. 
In \cite{KublikTsai16}, it is proven that $$\sigma_i=1-d_\Gamma(\mathbf{z})\kappa_i(\mathbf{z})$$ where $\kappa_1(\mathbf{z})$ and $\kappa_2(\mathbf{z})$ are the principal curvatures of the parallel surface $\Gamma_\eta$. 
Note that when $\mathbf{z} \in \Gamma$, $\sigma_1=\sigma_2=1.$ For the sake of notation, we will suppress the dependence of the tangent vectors, normal vectors, and singular values on the point $\mathbf{z} \in T_\epsilon$. 

Assuming $\epsilon$ small enough, we have that the tangent planes coincide for $\mathbf{z} \in T_\epsilon$ and $\mathbf{x} \in \Gamma$ if $P_\Gamma \mathbf{z}=\mathbf{x}$. Thus, for the extended optimal control problem, the set of compact control values at each $\mathbf{z} \in T_\epsilon$ is $A_{P_\Gamma \mathbf{z}}.$ Then $A$ and $\mathcal{A}$ are as in Section \ref{Gamma}. Now, consider the following extended dynamical system in $T_\epsilon$: 
\begin{align}\label{dyn0}
\frac{d{\mathbf{y}}}{dt}(t)&=\overline{f}({\mathbf{y}}(t),\mathbf{a}(t))B({\mathbf{y}}(t),\mu)\mathbf{a}(t), \; t>0 \\ \label{dyn20}
{\mathbf{y}}(0)&=\mathbf{z},\; \mathbf{z} \in T_\epsilon.
\end{align}  Here $\overline{f}: T_\epsilon \times A \to \mathbb{R}$ is given by $\overline{f}(\mathbf{z},\mathbf{a})=f(P_\Gamma \mathbf{z}, \mathbf{a}),$ and we have that $\mathbf{a}(t) \in A_{\mathbf{y}(t)}$ for all $t\geq0.$ This last requirement ensures that the trajectory remains in $\Gamma_\eta$ for all $t \geq 0$ where $\eta=d_\Gamma(\mathbf{z})$. The inclusion of the tensor $B(\mathbf{y}(t),\mu)$ in the dynamical system above adjusts the velocity according to the curvature of the surface. This has the implication that equivalent paths will have the same total cost.

Denote the solution to \eqref{dyn0}-\eqref{dyn20} (which exists under the assumption \eqref{eq:B1} below) by $\mathbf{y}_{\mathbf{z}}(t,\mathbf{a}(t))$. For each $\mathbf{z} \in T_\epsilon$ with $d_\Gamma(\mathbf{z})=\eta$, we have the following subset of admissible controls:
$$\mathcal{A}_{\mathbf{z}}:=\{\mathbf{a} \in \mathcal{A} \; |\; \mathbf{y}_{\mathbf{z}}(t,\mathbf{a}(t)) \in \Gamma_\eta\}.$$
Then the requirement that $\mathbf{a}(t) \in A_{\mathbf{y}_\mathbf{z}(t)}$ for all $t \geq 0$ in the dynamical system \eqref{dyn0}-\eqref{dyn20} is equivalent to $\mathbf{a}(\cdot)\in \mathcal{A}_\mathbf{z}.$ Since $A_\mathbf{x}=A_{P_\Gamma \mathbf{z}}$, we have $\mathcal{A}_{\mathbf{z}}=\mathcal{A}_{P_\Gamma \mathbf{z}}$.   Refer to $\mathbf{y}(\cdot)$ that satisfy (\ref{dyn0})-(\ref{dyn20}) where $\mathbf{a}(\cdot) \in \mathcal{A}_\mathbf{z}$ as an admissible path on $T_\epsilon$. Again, for the rest of the paper we will write suppress writing the dependence of $y_{\mathbf{z}}(t,\mathbf{a}(t))$ on $\mathbf{a}(\cdot)$. There is no confusion between the notation for admissible paths on $\Gamma$ and $T_\epsilon$. The initial point will indicate which dynamical system the path solves.

Note that if the initial point $\mathbf{z} \in \Gamma$, $\eta=0$ and $$B(\mathbf{y}_\mathbf{z}(t), \mu)\mathbf{a}(t)=\mathbf{a}(t),$$  since $\mathbf{a}(t) \in T_{\mathbf{y}_\mathbf{z}(t)}$ and $\sigma_1=\sigma_2=1.$ Thus, \eqref{dyn0}-\eqref{dyn20} reduces to the dynamical system on $\Gamma$ given in \eqref{dynsys0}-\eqref{dynbc0}, and an admissible path on $T_\epsilon$ with $\mathbf{a}(\cdot) \in \mathcal{A}_\mathbf{z}$ is also an admissible path on $\Gamma$ with the same $\mathbf{a}(\cdot) \in \mathcal{A}_\mathbf{z}$.  

Now, let  $\overline{\mathcal{T}}=\{\mathbf{z} \in T_\epsilon \; | \; P_\Gamma \mathbf{z} \in \mathcal{T}\}$. Let the running cost and exit time penalty on $T_\epsilon$ be given by $\overline{r}: T_\epsilon \times A \to \mathbb{R}$ where $\overline{r}(\mathbf{z},\mathbf{a})=r(P_\Gamma \mathbf{z}, \mathbf{a})$ and $\overline{g}:T_\epsilon \to \mathbb{R}$ where $\overline{g}(\mathbf{z})=g(P_\Gamma \mathbf{z})$, respectively.  Then the total cost associated with initial state $\mathbf{z}$ and control ${\mathbf{a}}(\cdot)\in \mathcal{A}_\mathbf{z}$ to reach $\overline{\mathcal{T}}$ is given by 
\begin{align*}
\overline{C}(\mathbf{z},\mathbf{a}(\cdot))&:=\int_0^{\overline{T}} \overline{r}( \mathbf{{y}_\mathbf{z}}(t),\mathbf{a}(t)) dt +\overline{g}( \mathbf{{y}}_\mathbf{z}(\overline{T}))
\end{align*}
where $\overline{T}=\min \{ t \;| \; \mathbf{{y}_\mathbf{z}}(t) \in \overline{\mathcal{T}}\}$. Note that if $\mathbf{z} \in \Gamma$, then $\overline{C}(\mathbf{z},\mathbf{a}(\cdot))=C(\mathbf{z},\mathbf{a}(\cdot)).$ 
The value function $v:T_\epsilon \to \mathbb{R}$ is the minimal total cost to $\overline{\mathcal{T}}$ starting at $\mathbf{z}$ and is given by
\begin{equation}\label{valueeps0}
v(\mathbf{z})=\inf_{{\mathbf{a}}(\cdot) \in {\mathcal{A}_\mathbf{z}}} \overline{C}(\mathbf{z},{\mathbf{a}}(\cdot)).
\end{equation}

We have that the assumption \ref{eq:A1} implies:

\begin{equation}\label{eq:B1}\begin{cases} \mbox{The mappings } (\mathbf{z},\mathbf{a}) \mapsto \overline{r}( \mathbf{z},\mathbf{a}) \mbox{ and }  (\mathbf{z},\mathbf{a}) \mapsto \overline{f}( \mathbf{z},\mathbf{a})B(\mathbf{z},\mu)\mathbf{a} \\ \mbox{are Lipschitz continuous in their first arguments.}   \\
\overline{g} \mbox{ is lower semicontinuous and } \min_{\overline{\mathcal{T}}} \overline{g} < \infty.\\
 0<R_1\leq \overline{r}(\mathbf{z},\mathbf{a}) \leq R_2,  \; \mbox{for all } \mathbf{z} \in T_\epsilon \mbox{ and } \mathbf{a} \in A, \\
  0<G_1\leq \overline{g}(\mathbf{z}) \leq G_2, \;  \mbox{for all } \mathbf{z} \in \overline{\mathcal{T}},\\
\mbox{There exists constants } \overline{F}_1,\overline{F}_2 \mbox{ such that: } \\0<\overline{F}_1\leq ||\overline{f}(\mathbf{z},\mathbf{a})B(\mathbf{z},\mu)\mathbf{a}|| \leq \overline{F}_2, \; \mbox{for all } \mathbf{z} \in T_\epsilon \mbox{ and } \mathbf{a} \in A.  \tag{B1}
\end{cases}\end{equation}
Again, while \eqref{eq:B1} does not guarantee the existence of the optimal control, the assumption does ensure that the value function and corresponding viscosity solution of the HJB equation coincide. If we also assume the set $V(\mathbf{x})$ is strictly convex, then $\overline{V}(\mathbf{z})=\{\overline{f}(\mathbf{z},\mathbf{a})B(\mathbf{z},\mu)\mathbf{a}/\overline{r}(\mathbf{z}, \mathbf{a}) \; | \; \mathbf{a} \in A_\mathbf{z}\}$ is strictly convex for each $z \in T_\epsilon$. Thus, if the optimal control exists in the problem on $\Gamma$, it also exists for extended problem on $T_\epsilon$.

Before deriving the corresponding HJB equation, we prove that the value function, $v$, is the constant normal extension of the value function of the optimal control problem defined on $\Gamma$.  The following theorem states that two admissible paths, $\mathbf{y}_{\mathbf{z}_1}(\cdot)$ and $\mathbf{y}_{\mathbf{z}_2}(\cdot)$, on $T_\epsilon$ with equivalent initial points in $T_\epsilon$ and equivalent controls stay equivalent for all time.  
\begin{theorem}\label{radial0} Suppose that $\mathbf{z}_1,\mathbf{z}_2 \in T_\epsilon$ such that  $P_\Gamma \mathbf{z}_1=P_\Gamma \mathbf{z}_2$. Let  ${\mathbf{a}_1}(\cdot) \in {\mathcal{A}_{\mathbf{z}_1}}$ and ${\mathbf{a}_2}(\cdot) \in {\mathcal{A}_{\mathbf{z}_2}}$ such that ${\mathbf{a}_1}\equiv {\mathbf{a}_2}$. Suppose ${{\mathbf{y}_{\mathbf{z}_1}}:[0,\infty) \to T_\epsilon}$   solves \eqref{dyn0} with  ${\mathbf{y}_{\mathbf{z}_1}}(0)=\mathbf{z}_1$ and ${{\mathbf{y}_{\mathbf{z}_2}}:[0,\infty) \to T_\epsilon}$ solves \eqref{dyn0} with  ${\mathbf{y}_{\mathbf{z}_{2}}}(0)=\mathbf{z}_2$.  Then $P_\Gamma {\mathbf{y}_{\mathbf{z}_1}}(t)=P_\Gamma {\mathbf{y}_{\mathbf{z}_2}}(t)$ for all $t \geq 0$.

\end{theorem}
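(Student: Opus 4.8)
The plan is to show that the curve $\mathbf{w}(t) := P_\Gamma \mathbf{y}_{\mathbf{z}_1}(t)$ and the curve $\mathbf{v}(t) := P_\Gamma \mathbf{y}_{\mathbf{z}_2}(t)$ satisfy the same ODE with the same initial condition, and then invoke uniqueness. First I would establish the key geometric identity: if $\mathbf{z} \in T_\epsilon$ with $P_\Gamma \mathbf{z} = \mathbf{x}$, then $P_\Gamma'(\mathbf{z})$ acts as $\sigma_i$ on the tangent direction $\mathbf{t}_i$ and annihilates the normal direction $\mathbf{n}$; combined with Definition~\ref{defn:Btensor}, this gives $P_\Gamma'(\mathbf{z}) B(\mathbf{z},\mu) = P_{T_\mathbf{x}\Gamma}$, the orthogonal projection onto $T_\mathbf{x}\Gamma$ — note the $\sigma_i^{-1}$ in $B_0$ exactly cancels the $\sigma_i$ from $P_\Gamma'$, and $B_1$ is killed since $P_\Gamma'$ annihilates normals. (One should check that the tangent directions $\mathbf{t}_i$ of $\Gamma_\eta$ at $\mathbf{z}$ are parallel-transported to the principal directions of $\Gamma$ at $\mathbf{x}$, which is standard for parallel surfaces and is implicit in the cited result of \cite{KublikTsai16}.)

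Next I would differentiate. Since $\mathbf{y}_{\mathbf{z}_1}(t)$ solves \eqref{dyn0}, the chain rule gives
\begin{equation*}
\frac{d\mathbf{w}}{dt}(t) = P_\Gamma'(\mathbf{y}_{\mathbf{z}_1}(t))\,\overline{f}(\mathbf{y}_{\mathbf{z}_1}(t),\mathbf{a}(t))\,B(\mathbf{y}_{\mathbf{z}_1}(t),\mu)\,\mathbf{a}(t).
\end{equation*}
Using the identity above, the $P_\Gamma' B$ factor collapses to the tangential projection at $\mathbf{w}(t)$, and since $\mathbf{a}(t) \in A_{\mathbf{y}_{\mathbf{z}_1}(t)}$ is already tangent to $\Gamma_\eta$ — hence, after the tangent planes are identified, tangent to $\Gamma$ at $\mathbf{w}(t)$ — the projection acts as the identity on it. Also $\overline{f}(\mathbf{y}_{\mathbf{z}_1}(t),\mathbf{a}(t)) = f(\mathbf{w}(t),\mathbf{a}(t))$ by definition of $\overline{f}$. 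So $\mathbf{w}$ solves $\dot{\mathbf{w}} = f(\mathbf{w},\mathbf{a})\mathbf{a}$, i.e. the surface dynamical system \eqref{dynsys0}. The identical computation shows $\mathbf{v}$ solves the same system, with the same control $\mathbf{a}_1 \equiv \mathbf{a}_2 =: \mathbf{a}$, and $\mathbf{w}(0) = P_\Gamma \mathbf{z}_1 = P_\Gamma \mathbf{z}_2 = \mathbf{v}(0)$. By assumption \eqref{eq:A1}, the right-hand side $f(\cdot,\mathbf{a})\mathbf{a}$ is Lipschitz in its first argument (for a.e.\ fixed $t$, with uniform constant), so the Picard–Lindelöf / Gronwall uniqueness theorem for ODEs with measurable-in-$t$, Lipschitz-in-state right-hand sides (Carathéodory) forces $\mathbf{w}(t) = \mathbf{v}(t)$ for all $t \ge 0$, which is exactly the claim.

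I expect the main obstacle to be the geometric identity $P_\Gamma'(\mathbf{z}) B(\mathbf{z},\mu) = P_{T_\mathbf{x}\Gamma}$, specifically being careful that (i) the singular vectors of $P_\Gamma'(\mathbf{z})$ are precisely the principal directions $\mathbf{t}_i$ of $\Gamma_\eta$ with singular values $\sigma_i = 1 - d_\Gamma(\mathbf{z})\kappa_i$, and that the corresponding output directions at $\mathbf{x} = P_\Gamma\mathbf{z}$ are the matching principal directions of $\Gamma$ (so that $B_0$, which is expressed in the $\mathbf{t}_i$ basis at $\mathbf{z}$, composes correctly with $P_\Gamma'$), and (ii) that $\ker P_\Gamma'(\mathbf{z}) = \mathrm{span}\,\mathbf{n}(\mathbf{z})$, which follows from $P_\Gamma(\mathbf{z}) = \mathbf{z} - d_\Gamma(\mathbf{z})\nabla d_\Gamma(\mathbf{z})$ and the fact that $\nabla d_\Gamma$ is constant along normal lines. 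A secondary technical point is justifying differentiation under $P_\Gamma$: this needs $\epsilon$ small enough that $P_\Gamma$ is $C^1$ (indeed as smooth as $\Gamma$ allows) on $T_\epsilon$, which is the standard tubular-neighborhood hypothesis already in force. Everything else — the definition-chasing for $\overline{f}$ and the ODE uniqueness — is routine.
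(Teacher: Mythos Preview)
Your proposal is correct and follows essentially the same route as the paper: project the two trajectories via $P_\Gamma$, use the identity $P_\Gamma'(\mathbf{z})B(\mathbf{z},\mu)=\mathbf{t}_1\otimes\mathbf{t}_1+\mathbf{t}_2\otimes\mathbf{t}_2$ (the paper obtains this from the explicit SVD $P_\Gamma'=U\Sigma U^T$) together with $\mathbf{a}(t)\in A_{\mathbf{y}(t)}$ to reduce both projected curves to solutions of the surface system \eqref{dynsys0}-\eqref{dynbc0}, and conclude by ODE uniqueness under \eqref{eq:A1}. Your added care about the Carath\'eodory setting and the differentiability of $P_\Gamma$ is appropriate but does not depart from the paper's argument.
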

\begin{proof}
Let $\mathbf{y}_1:[0,\infty) \to \Gamma$ and $\mathbf{y}_2:[0,\infty) \to \Gamma$ be given by $$\mathbf{y}_1(t)=P_\Gamma {\mathbf{y}_{\mathbf{z}_1}}(t)$$ and $$\mathbf{y}_2(t)=P_\Gamma {\mathbf{y}_{\mathbf{z}_2}}(t)$$ for all $t \geq 0$, respectively. We need to show that $\mathbf{y}_1(t)=\mathbf{y}_2(t)$ for all $t\geq 0$. We compute $ \mathbf{y}_1'$. We have the following singular value decomposition for $P_\Gamma '$: $P_\Gamma'=U \Sigma U^T$ where  \[
   U=
  \left[ {\begin{array}{ccc}
   \mathbf{t}_1 &\mathbf{t}_2 & \mathbf{n}
  \end{array} } \right] \mbox{ and }
  \Sigma= \left[ {\begin{array}{ccc}
 
    \sigma_1 & 0 &0\\
    0 & \sigma_2 &0\\
    0 &    0     &0
  \end{array} } \right].
\]

Since $\mathbf{t}_1,$  $\mathbf{t}_2$, and $\mathbf{n}$ are orthonormal, for $i,j=1,2$,

\begin{equation*}
(\mathbf{t}_i \otimes \mathbf{t}_i)(\mathbf{t}_j \otimes \mathbf{t}_j)=
    \begin{cases}
    \mathbf{t}_i \otimes \mathbf{t}_i,~~~&i=j,\\
    0,~~~&i\neq j,
    \end{cases}
\end{equation*}
and for $i=1,2$
\begin{equation*}
(\mathbf{t}_i \otimes \mathbf{t}_i)(\mathbf{n} \otimes \mathbf{n}) =0. 
\end{equation*}
Using the above, we have  
\begin{align*}
\mathbf{y}_1'(t) & = (P_\Gamma \mathbf{{y}_{\mathbf{z}_1}}(t))'\\
&=P_\Gamma ' \frac{d\mathbf{{y}_{\mathbf{z}_1}}}{dt}(t)\\ 
&= P_\Gamma'\overline{f}({\mathbf{y}_{\mathbf{z}_1}}(t),\mathbf{a}_1(t)) B({\mathbf{y}_{\mathbf{z}_1}}(t),\mu)\mathbf{a}_1(t)\\ 
&=P_\Gamma'B({\mathbf{y}_{\mathbf{z}_1}}(t),\mu)\overline{f}({\mathbf{y}_{\mathbf{z}_1}}(t),\mathbf{a}_1(t)) \mathbf{a}_1(t)\\
&= (\mathbf{t}_1 \otimes \mathbf{t}_1+\mathbf{t}_2 \otimes \mathbf{t}_2 ){f}({\mathbf{y}}_1(t),\mathbf{a}_1(t)){\mathbf{a}_1}(t) \\
&= {f}({\mathbf{y}}_1(t),\mathbf{a}_1(t))(\mathbf{t}_1 \otimes \mathbf{t}_1+\mathbf{t}_2 \otimes \mathbf{t}_2 ){{\mathbf{a}_1}}(t)\\
&=f(\mathbf{y}_1(t),\mathbf{a}_1(t))\mathbf{a}_1(t).
\end{align*}  
The last equality is true since $\mathbf{t}_1 \otimes \mathbf{t}_1+\mathbf{t}_2 \otimes \mathbf{t}_2$ is the orthogonal projection of $\mathbf{a}_1(t)$ onto the tangent space at $\mathbf{y}_{\mathbf{z}_1}(t)$ and $\mathbf{a}_1(t) \in A_{\mathbf{y}_{\mathbf{z}_1}(t)} \subset T_{\mathbf{y}_{\mathbf{z}_1}(t)}\Gamma$. Thus $\mathbf{y}_1$ is an admissible path on $\Gamma$ where $\mathbf{y}_1(0)=\mathbf{x}=P_\Gamma \mathbf{z}_1=P_\Gamma \mathbf{z}_2$ and $\mathbf{a}_1(\cdot) \in \mathcal{A}_\mathbf{x}=\mathcal{A}_{P_\Gamma \mathbf{z}_1}.$ By the same reasoning $\mathbf{y}_2$ also an admissible path with $\mathbf{y}_2(0)=\mathbf{x}$ and $\mathbf{a}_2(\cdot) \in \mathcal{A}_\mathbf{x}=\mathcal{A}_{P_\Gamma \mathbf{z}_2}.$  We assume \eqref{eq:A1}. Thus, the solution to \eqref{dynsys0}-\eqref{dynbc0} is unique. Since $\mathbf{a}_1\equiv \mathbf{a}_2$, $\mathbf{y}_1(t)=\mathbf{y}_2(t)$ for all $t \geq 0$. 
\end{proof}

Now, it easily follows that the value function defined on $T_\epsilon$ is the constant normal extension of the value function defined on $\Gamma.$

\begin{corollary}\label{thm:radial0}
If $u:\Gamma \to \mathbb{R}$ is the value function on $\Gamma$ given by \eqref{valfunman}, then 
$$v(\mathbf{z})=\overline{u}(\mathbf{z}):=u(P_\Gamma \mathbf{z})$$
where $v$ is the value function on $T_\epsilon$ given by \eqref{valueeps0}.

\end{corollary}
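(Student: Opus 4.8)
The plan is to show that the closest-point projection sets up a cost-preserving correspondence between admissible paths in $T_\epsilon$ issuing from $\mathbf{z}$ and admissible paths on $\Gamma$ issuing from $\mathbf{x}:=P_\Gamma\mathbf{z}$, and then to conclude $v(\mathbf{z})=u(\mathbf{x})$ by taking infima. Since the admissible control sets already coincide, $\mathcal{A}_\mathbf{z}=\mathcal{A}_{P_\Gamma\mathbf{z}}$, it suffices to prove the cost identity $\overline{C}(\mathbf{z},\mathbf{a}(\cdot))=C(\mathbf{x},\mathbf{a}(\cdot))$ for every $\mathbf{a}(\cdot)\in\mathcal{A}_\mathbf{z}$; the corollary then follows immediately from the definitions of $v$ and $u$ as infima of these costs over the common control set.

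First I would apply Theorem \ref{radial0} with the choice $\mathbf{z}_1=\mathbf{z}$ and $\mathbf{z}_2=\mathbf{x}=P_\Gamma\mathbf{z}\in\Gamma$, using the same control $\mathbf{a}(\cdot)$ on both. As noted in Section \ref{sec:extTeps}, when the initial point lies on $\Gamma$ the tensor $B$ acts as the identity on admissible velocities and $P_\Gamma$ restricts to the identity on $\Gamma$, so $\mathbf{y}_\mathbf{x}(\cdot)$ is exactly the solution of the surface dynamical system \eqref{dynsys0}--\eqref{dynbc0}; Theorem \ref{radial0} then gives $P_\Gamma\mathbf{y}_\mathbf{z}(t)=\mathbf{y}_\mathbf{x}(t)$ for all $t\ge 0$. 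With this identification in hand the three ingredients of the cost are handled in turn: the running cost agrees pointwise because $\overline{r}(\mathbf{y}_\mathbf{z}(t),\mathbf{a}(t))=r(P_\Gamma\mathbf{y}_\mathbf{z}(t),\mathbf{a}(t))=r(\mathbf{y}_\mathbf{x}(t),\mathbf{a}(t))$ by the definition of $\overline{r}$; the exit times agree because $\mathbf{y}_\mathbf{z}(t)\in\overline{\mathcal{T}}$ iff $P_\Gamma\mathbf{y}_\mathbf{z}(t)=\mathbf{y}_\mathbf{x}(t)\in\mathcal{T}$, by the definition $\overline{\mathcal{T}}=P_\Gamma^{-1}(\mathcal{T})$, so $\overline{T}=\min\{t:\mathbf{y}_\mathbf{z}(t)\in\overline{\mathcal{T}}\}=\min\{t:\mathbf{y}_\mathbf{x}(t)\in\mathcal{T}\}=T$; and the terminal cost agrees because $\overline{g}(\mathbf{y}_\mathbf{z}(\overline T))=g(P_\Gamma\mathbf{y}_\mathbf{z}(T))=g(\mathbf{y}_\mathbf{x}(T))$. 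Adding these yields $\overline{C}(\mathbf{z},\mathbf{a}(\cdot))=C(\mathbf{x},\mathbf{a}(\cdot))$.

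The step requiring the most care is the equality of exit times, including the degenerate case in which the target is never reached: one should check that $\overline{T}=+\infty$ precisely when $T=+\infty$ (again immediate from $\overline{\mathcal{T}}=P_\Gamma^{-1}(\mathcal{T})$ together with $P_\Gamma\mathbf{y}_\mathbf{z}=\mathbf{y}_\mathbf{x}$), in which case both $\overline{C}$ and $C$ equal $+\infty$ using $\overline{r},r\ge R_1>0$ from \eqref{eq:A1}--\eqref{eq:B1}, so the identity still holds. One should also confirm that the identification $\mathcal{A}_\mathbf{z}=\mathcal{A}_{P_\Gamma\mathbf{z}}$ used above is exactly the one recorded in Section \ref{sec:extTeps} (it rests on $A_\mathbf{x}=A_{P_\Gamma\mathbf{z}}$ for $\epsilon$ small and on the fact, established inside the proof of Theorem \ref{radial0}, that $\mathbf{a}(\cdot)$ keeps $\mathbf{y}_\mathbf{z}(\cdot)$ in $\Gamma_\eta$ iff it keeps the projected path on $\Gamma$). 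Once the cost identity holds for every control in this common set, taking the infimum over $\mathcal{A}_\mathbf{z}=\mathcal{A}_{P_\Gamma\mathbf{z}}$ gives $v(\mathbf{z})=u(P_\Gamma\mathbf{z})=\overline{u}(\mathbf{z})$, as claimed.
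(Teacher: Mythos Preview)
Your proposal is correct and follows essentially the same route as the paper's proof: apply Theorem~\ref{radial0} with $\mathbf{z}_1=\mathbf{z}$ and $\mathbf{z}_2=P_\Gamma\mathbf{z}$, use that the extended dynamics started on $\Gamma$ coincide with the surface dynamics, deduce $P_\Gamma\mathbf{y}_\mathbf{z}(t)=\mathbf{y}_{P_\Gamma\mathbf{z}}(t)$ and hence $\overline{T}=T$ via the definition of $\overline{\mathcal{T}}$, conclude $\overline{C}(\mathbf{z},\mathbf{a}(\cdot))=C(P_\Gamma\mathbf{z},\mathbf{a}(\cdot))$, and take infima over the common control set $\mathcal{A}_\mathbf{z}=\mathcal{A}_{P_\Gamma\mathbf{z}}$. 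Your version is slightly more explicit in separating the running, terminal, and exit-time ingredients and in treating the degenerate case $\overline{T}=T=+\infty$, but the argument is the same.
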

\begin{proof}

Let $\mathbf{z} \in T_\epsilon$ and $\mathbf{a}(\cdot) \in \mathcal{A}_\mathbf{z}$. Then $\mathbf{a}(\cdot) \in \mathcal{A}_{P_\Gamma \mathbf{z}}$ since $\mathcal{A}_\mathbf{z}=\mathcal{A}_{P_\Gamma \mathbf{z}}$.  Then we have

\begin{align*}
\overline{C}(\mathbf{z},{\mathbf{a}}(\cdot))=\int_0^{\overline{T}} \overline{r}( \mathbf{{y}}_{\mathbf{z}}(t),\mathbf{a}(t)) dt +\overline{g}( {\mathbf{y}_{\mathbf{z}}}(\overline{T}))
\end{align*}
and 
\begin{align*}
\overline{C}(P_\Gamma\mathbf{z},{\mathbf{a}}(\cdot))=\int_0^{{T}} \overline{r}( \mathbf{{y}}_{P_\Gamma\mathbf{z}}(t),\mathbf{a}(t)) dt +\overline{g}( {\mathbf{y}_{P_\Gamma\mathbf{z}}}({T})),
\end{align*}  where ${\mathbf{y}}_{\mathbf{z}}(\cdot)$ and  ${\mathbf{y}}_{P_\Gamma\mathbf{z}}(\cdot)$ are admissible paths on $T_\epsilon$ with ${\mathbf{y}}_{\mathbf{z}}(0)=\mathbf{z}$, ${\mathbf{y}}_{P_\Gamma \mathbf{z}}(0)=P_\Gamma\mathbf{z}$, respectively. Recall, that $\mathbf{y}_{P_\Gamma \mathbf{z}}$ is also an admissible path on $\Gamma$ so that $\overline{C}(P_\Gamma\mathbf{z},{\mathbf{a}}(\cdot))={C}(P_\Gamma\mathbf{z},{\mathbf{a}}(\cdot))$.

Theorem \ref{radial0} implies that $P_\Gamma \mathbf{y}_{\mathbf{z}}(t) = \mathbf{y}_{P_\Gamma \mathbf{z}}(t)$ for all $t \geq 0$.  
Since $$\overline{\mathcal{T}} = \{\mathbf{z} \in T_\epsilon \; | \;P_\Gamma \mathbf{z} \in \mathcal{T} \},$$
we have 
\begin{align*}
\min\{t \; | \; P_\Gamma \mathbf{y}_{\mathbf{z}}(t)= \mathbf{y}_{P_\Gamma \mathbf{z}}(t) \in \mathcal{T}\} & = \min\{ t \; | \; \mathbf{y}_{\mathbf{z}}(t) \in \overline{\mathcal{T}} \}.
\end{align*} 

Therefore, $\overline{T}=T$, and
\begin{align*}
\overline{C}(\mathbf{z},\mathbf{a}(\cdot))
&=\overline{C}(P_\Gamma \mathbf{z}, \mathbf{a}(\cdot))\\
&=C(P_\Gamma \mathbf{z},\mathbf{a}(\cdot)).
\end{align*} 
Finally, we have
\begin{align*}
v(\mathbf{z})&=\inf_{{\mathbf{a}}(\cdot) \in {\mathcal{A}_\mathbf{z}}} \overline{C}(\mathbf{z},{\mathbf{a}}(\cdot))\\
&=\inf_{{\mathbf{a}}(\cdot) \in {\mathcal{A}_{P_\Gamma \mathbf{z}}}} {C}(P_\Gamma \mathbf{z},{\mathbf{a}}(\cdot))\\
&=u(P_\Gamma \mathbf{z}).
\end{align*}
\end{proof}

Now that we have showed that the value function on the narrow band is equivalent to the value function on the surface, we define the Hamilton-Jacobi-Bellman equation on $T_\epsilon$ associated the value function $v$. Again, the dynamic programming principle states that for sufficiently small $\tau>0$, we have  
\begin{equation}\label{bellman20}
v(\mathbf{z})=\inf_{{\mathbf{a}}(\cdot) \in {\mathcal{A}_\mathbf{z}} } \bigg \{ \int_0^\tau \overline{r}( \mathbf{{y}}_{\mathbf{z}}(t),\mathbf{a}(t))dt +v(\mathbf{{y}_{\mathbf{z}}}(\tau))\bigg \}.
\end{equation}
The corresponding Hamilton-Jacobi-Bellman equation on $T_\epsilon$ is
\begin{equation}\label{HJBeps0}
\min_{{\mathbf{a}}\in {A}_\mathbf{z}} \bigg\{ \overline{r}( \mathbf{z},\mathbf{a})+\nabla v(\mathbf{z}) \cdot \overline{f}( \mathbf{z},\mathbf{a})B(\mathbf{z},\mu){\mathbf{a}} \bigg \}=0, \;\; \mathbf{z} \in T_\epsilon \backslash\overline{\mathcal{T}} .
\end{equation}
The boundary condition is 
\begin{equation}\label{HJBepsbc0} v(\mathbf{z})=\overline{g}( \mathbf{z}), \; \; \mathbf{z}\in \overline{\mathcal{T}}.
\end{equation} 

The assumption \eqref{eq:B1} implies that the value function coincides with the unique viscosity solution \cite{BardiDolcetta97}. Corollary \ref{thm:radial0} implies that the viscosity solution of \eqref{HJBeps0}-\eqref{HJBepsbc0} is given by \begin{equation}\label{viscradial} v(\mathbf{z})=u(P_\Gamma \mathbf{z}),\end{equation} where $u$ is the viscosity solution of \eqref{HJBman0}-\eqref{HJBmanbc0} on $T_\epsilon$. An interesting result is that we can prove \eqref{viscradial} without Corollary \ref{thm:radial0}, i.e., using only the derived HJB equations and properties of viscosity solutions. 

We begin with the following lemma that relates the surface gradients for parallel surfaces at equivalent points:
\begin{lemma}\label{gradequiv}
Assume $\epsilon$ is small enough so that $P_\Gamma$ is differentiable, and let $|\eta| <\epsilon$. Given $\phi_0 \in C^1(\Gamma)$, define $\phi_\eta \in C^1(\Gamma_\eta)$  by $\phi_\eta(\mathbf{z}):=\phi_0(P_\Gamma \mathbf{z})$. Then for $\mathbf{z} \in \Gamma_\eta$, 
\begin{equation}\label{surfgrads} \nabla_\Gamma \phi_0(P_\Gamma \mathbf{z})=B(\mathbf{z},\mu) \nabla_{\Gamma_\eta} \phi_\eta (\mathbf{z}).
\end{equation}
\end{lemma}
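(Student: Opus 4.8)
The plan is to characterize each surface gradient by its defining duality property — that $\nabla_S\psi(\mathbf{p})$ is the unique vector in $T_\mathbf{p}S$ with $\nabla_S\psi(\mathbf{p})\cdot\mathbf{w} = \tfrac{d}{ds}\big|_{0}\psi(\gamma(s))$ for every $C^1$ curve $\gamma$ in $S$ with $\gamma(0)=\mathbf{p}$, $\gamma'(0)=\mathbf{w}$ — and then transport one gradient to the other along $P_\Gamma$ by the chain rule. Fix $\mathbf{z}\in\Gamma_\eta$ and set $\mathbf{x}=P_\Gamma\mathbf{z}\in\Gamma$. For $\epsilon$ small the surfaces $\Gamma$ and $\Gamma_\eta$ share the normal line through corresponding points, so $T_\mathbf{z}\Gamma_\eta = T_\mathbf{x}\Gamma = \mathrm{span}\{\mathbf{t}_1,\mathbf{t}_2\}$ and $\mathbf{n}$ is the common unit normal. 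I will use the singular value decomposition $P_\Gamma'(\mathbf{z}) = U\Sigma U^T$ with $U=[\mathbf{t}_1\ \mathbf{t}_2\ \mathbf{n}]$ and $\Sigma=\mathrm{diag}(\sigma_1,\sigma_2,0)$ from the proof of Theorem \ref{radial0}; in particular $P_\Gamma'(\mathbf{z})$ is symmetric, and since $\sigma_i = 1 - d_\Gamma(\mathbf{z})\kappa_i$ we have $\sigma_i\neq 0$ for $\epsilon$ small.

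Next I carry out the chain-rule step. Given $\mathbf{w}\in T_\mathbf{z}\Gamma_\eta$, choose a $C^1$ curve $\gamma$ in $\Gamma_\eta$ with $\gamma(0)=\mathbf{z}$ and $\gamma'(0)=\mathbf{w}$; then $P_\Gamma\circ\gamma$ is a $C^1$ curve in $\Gamma$ through $\mathbf{x}$ with initial velocity $P_\Gamma'(\mathbf{z})\mathbf{w}$. Differentiating $\phi_\eta(\gamma(s)) = \phi_0(P_\Gamma\gamma(s))$ at $s=0$ and using symmetry of $P_\Gamma'(\mathbf{z})$ gives
$$\nabla_{\Gamma_\eta}\phi_\eta(\mathbf{z})\cdot\mathbf{w} \;=\; \nabla_\Gamma\phi_0(\mathbf{x})\cdot P_\Gamma'(\mathbf{z})\mathbf{w} \;=\; \big(P_\Gamma'(\mathbf{z})\,\nabla_\Gamma\phi_0(\mathbf{x})\big)\cdot\mathbf{w}.$$
Because $\nabla_\Gamma\phi_0(\mathbf{x})\in\mathrm{span}\{\mathbf{t}_1,\mathbf{t}_2\}$, the SVD shows $P_\Gamma'(\mathbf{z})\,\nabla_\Gamma\phi_0(\mathbf{x})\in\mathrm{span}\{\mathbf{t}_1,\mathbf{t}_2\} = T_\mathbf{z}\Gamma_\eta$; as both sides of the displayed identity are then tangent to $\Gamma_\eta$ and agree against every tangent test vector $\mathbf{w}$, I conclude $\nabla_{\Gamma_\eta}\phi_\eta(\mathbf{z}) = P_\Gamma'(\mathbf{z})\,\nabla_\Gamma\phi_0(\mathbf{x})$.

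Finally I invert this using $B$. Write $\nabla_\Gamma\phi_0(\mathbf{x}) = c_1\mathbf{t}_1 + c_2\mathbf{t}_2$, so the previous step gives $\nabla_{\Gamma_\eta}\phi_\eta(\mathbf{z}) = \sigma_1 c_1\mathbf{t}_1 + \sigma_2 c_2\mathbf{t}_2$. Applying $B(\mathbf{z},\mu) = \sigma_1^{-1}\mathbf{t}_1\otimes\mathbf{t}_1 + \sigma_2^{-1}\mathbf{t}_2\otimes\mathbf{t}_2 + \mu\,\mathbf{n}\otimes\mathbf{n}$ and using orthonormality of $\{\mathbf{t}_1,\mathbf{t}_2,\mathbf{n}\}$ recovers $c_1\mathbf{t}_1 + c_2\mathbf{t}_2 = \nabla_\Gamma\phi_0(\mathbf{x}) = \nabla_\Gamma\phi_0(P_\Gamma\mathbf{z})$ — the $\mu\,\mathbf{n}\otimes\mathbf{n}$ term contributes nothing precisely because $\nabla_{\Gamma_\eta}\phi_\eta(\mathbf{z})$ is tangent to $\Gamma_\eta$. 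Equivalently, $B(\mathbf{z},\mu)P_\Gamma'(\mathbf{z})$ restricts to the identity on $\mathrm{span}\{\mathbf{t}_1,\mathbf{t}_2\}$, which is exactly \eqref{surfgrads}.

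The computations here are routine; the only real care needed is bookkeeping about which plane the relevant vectors live in. One must observe that $\nabla_\Gamma\phi_0(\mathbf{x})$, $\nabla_{\Gamma_\eta}\phi_\eta(\mathbf{z})$, and $P_\Gamma'(\mathbf{z})\nabla_\Gamma\phi_0(\mathbf{x})$ all lie in the common tangent plane $\mathrm{span}\{\mathbf{t}_1,\mathbf{t}_2\}$, so that agreement as linear functionals on that plane upgrades to equality of vectors, and that $\sigma_i\neq 0$ (valid for $\epsilon$ small), so that $B_0(\mathbf{z})$ genuinely inverts $P_\Gamma'(\mathbf{z})$ on the tangent plane while the normal block $\mu B_1(\mathbf{z})$ is harmless. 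I expect this plane-identification to be the main point requiring attention; everything else is the orthogonality relations already recorded in the proof of Theorem \ref{radial0}.
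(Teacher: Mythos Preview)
Your proof is correct but follows a different route from the paper's. The paper argues via the constant normal extension $\overline{\phi}_0:T_\epsilon\to\mathbb{R}$ of $\phi_0$: it invokes an external identity (Theorem~A.1 of \cite{KublikTsai16}) stating $\nabla_\Gamma\phi_0(P_\Gamma\mathbf{z})=B(\mathbf{z},\mu)\nabla\overline{\phi}_0(\mathbf{z})$, and then observes that because $\overline{\phi}_0$ is constant in the normal direction, its ambient gradient at $\mathbf{z}\in\Gamma_\eta$ is already tangential and hence equals $\nabla_{\Gamma_\eta}\phi_\eta(\mathbf{z})$. Your argument is instead self-contained: you differentiate $\phi_\eta=\phi_0\circ P_\Gamma$ along curves in $\Gamma_\eta$, use the symmetric SVD $P_\Gamma'(\mathbf{z})=\sigma_1\mathbf{t}_1\otimes\mathbf{t}_1+\sigma_2\mathbf{t}_2\otimes\mathbf{t}_2$ to identify $\nabla_{\Gamma_\eta}\phi_\eta(\mathbf{z})=P_\Gamma'(\mathbf{z})\nabla_\Gamma\phi_0(\mathbf{x})$, and then note that $B(\mathbf{z},\mu)$ inverts $P_\Gamma'(\mathbf{z})$ on the common tangent plane. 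The paper's proof is shorter but outsources the key computation; yours unpacks that computation explicitly and makes transparent exactly why the $\mu$-term is irrelevant and why the tangent planes of $\Gamma$ and $\Gamma_\eta$ coinciding is what drives the result. Both are valid; yours would stand alone without the cited reference.
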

\begin{proof}
Let $\overline{\phi}_0: T_\epsilon \to \mathbb{R}$ be the constant normal extension of $\phi_0.$ Then $\overline{\phi}_0 \in C^1(T_\epsilon)$ and $\restr{\overline{\phi}_0}{\Gamma_\eta}=\phi_{\eta}$. From Theorem A.1 in \cite{KublikTsai16},  we have  that ${\nabla_\Gamma \phi_0}(P_\Gamma \mathbf{z})=B(\mathbf{z},\mu)\nabla \overline {\phi}_0 (\mathbf{z})$.  Since $\overline{\phi}_0$ is the constant normal extension of $\phi_0$, $\nabla \overline{\phi}_0(\mathbf{z}) \in T_\mathbf{z}\Gamma_\eta$. Thus, if $\mathbf{z} \in \Gamma_\eta$, $\nabla \overline{\phi}_0(\mathbf{z})=\nabla_{\Gamma_\eta} \phi_\eta(\mathbf{z})$, and \eqref{surfgrads} holds.
\end{proof}
The following theorem proves \eqref{viscradial} directly using the definition of viscosity solutions.
\begin{theorem}\label{generalmain} If $u:\Gamma \to \mathbb{R}$ is the viscosity solution to \eqref{HJBman0}-\eqref{HJBmanbc0}, then the constant normal extension of $u$, $\overline{u}: T_\epsilon \to \mathbb{R},$ is the viscosity solution to \eqref{HJBeps0}-\eqref{HJBepsbc0}.
\end{theorem}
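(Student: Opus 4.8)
The plan is to prove the statement directly from the definitions of viscosity sub- and supersolution, transferring the test-function inequality between $\Gamma$ and the parallel surface $\Gamma_\eta$ through the closest point map. Since \eqref{eq:B1} already guarantees that \eqref{HJBeps0}-\eqref{HJBepsbc0} has a unique viscosity solution, it suffices to show that $\overline u := u\circ P_\Gamma$ is simultaneously a viscosity subsolution and a viscosity supersolution of \eqref{HJBeps0} and that it takes the data on $\overline{\mathcal T}$; the boundary condition is immediate since $\overline u(\mathbf z)=u(P_\Gamma\mathbf z)=g(P_\Gamma\mathbf z)=\overline g(\mathbf z)$ whenever $\mathbf z\in\overline{\mathcal T}$ (so $P_\Gamma\mathbf z\in\mathcal T$). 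The only structural ingredient needed beyond bookkeeping is Lemma \ref{gradequiv}, which relates $\nabla_\Gamma$ on $\Gamma$ and $\nabla_{\Gamma_\eta}$ on $\Gamma_\eta$ through the tensor $B$.

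The heart of the argument is a test-function construction. Fix a contact point: let $\psi\in C^1(T_\epsilon)$ be such that $\overline u-\psi$ has a local maximum at some $\mathbf z_0\in T_\epsilon\setminus\overline{\mathcal T}$, and set $\eta:=d_\Gamma(\mathbf z_0)$ and $\mathbf x_0:=P_\Gamma\mathbf z_0\in\Gamma\setminus\mathcal T$. For $\epsilon$ small the normal map $\Phi_\eta:\Gamma\to\Gamma_\eta$, $\Phi_\eta(\mathbf x)=\mathbf x+\eta\,\mathbf n(\mathbf x)$, is a $C^1$ diffeomorphism with $P_\Gamma\circ\Phi_\eta=\mathrm{id}_\Gamma$, so $\phi_0:=\psi\circ\Phi_\eta\in C^1(\Gamma)$. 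Since $\overline u(\Phi_\eta(\mathbf x))=u(P_\Gamma\Phi_\eta(\mathbf x))=u(\mathbf x)$ we have $u-\phi_0=(\overline u-\psi)\circ\Phi_\eta$, hence $u-\phi_0$ has a local maximum at $\mathbf x_0$. Moreover, using $P_\Gamma(\mathbf z)=\mathbf z-d_\Gamma(\mathbf z)\nabla d_\Gamma(\mathbf z)$ one checks $\Phi_\eta(P_\Gamma\mathbf z)=\mathbf z$ for $\mathbf z\in\Gamma_\eta$, so the restriction $\psi|_{\Gamma_\eta}$ equals the constant normal extension of $\phi_0$ restricted to $\Gamma_\eta$, i.e.\ the function $\phi_\eta$ of Lemma \ref{gradequiv}. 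That lemma then gives
\begin{equation*}
\nabla_\Gamma\phi_0(\mathbf x_0)=B(\mathbf z_0,\mu)\,\nabla_{\Gamma_\eta}\!\bigl(\psi|_{\Gamma_\eta}\bigr)(\mathbf z_0).
\end{equation*}

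To conclude, apply the subsolution property of $u$ for \eqref{HJBman0} at $\mathbf x_0\notin\mathcal T$ with test function $\phi_0$, obtaining $\min_{\mathbf a\in A_{\mathbf x_0}}\bigl\{\, r(\mathbf x_0,\mathbf a)+\nabla_\Gamma\phi_0(\mathbf x_0)\cdot f(\mathbf x_0,\mathbf a)\,\mathbf a\,\bigr\}\ge 0$ in the min-formulation convention of \eqref{HJBman0}. For $\mathbf a\in A_{\mathbf x_0}$, using that $B(\mathbf z_0,\mu)$ is symmetric and that $B(\mathbf z_0,\mu)\mathbf a$ is tangent to $\Gamma_\eta$ (because $\mathbf a\perp\mathbf n$, so $B_1$ annihilates $\mathbf a$ and $B_0$ keeps it tangential), one rewrites
\begin{equation*}
\nabla_\Gamma\phi_0(\mathbf x_0)\cdot f(\mathbf x_0,\mathbf a)\,\mathbf a
=\overline f(\mathbf z_0,\mathbf a)\,\nabla_{\Gamma_\eta}\!\bigl(\psi|_{\Gamma_\eta}\bigr)(\mathbf z_0)\cdot B(\mathbf z_0,\mu)\,\mathbf a
=\overline f(\mathbf z_0,\mathbf a)\,\nabla\psi(\mathbf z_0)\cdot B(\mathbf z_0,\mu)\,\mathbf a,
\end{equation*}
where the last equality replaces the tangential gradient on $\Gamma_\eta$ by the full ambient gradient of $\psi$ (legitimate since it is dotted against the tangent vector $B(\mathbf z_0,\mu)\mathbf a$). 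Since also $r(\mathbf x_0,\mathbf a)=\overline r(\mathbf z_0,\mathbf a)$ and $A_{\mathbf x_0}=A_{\mathbf z_0}$ (the tangent planes coincide for $\epsilon$ small), the minimum over $\mathbf a$ is exactly the left-hand side of \eqref{HJBeps0} with $v=\overline u$ tested by $\psi$ at $\mathbf z_0$; hence it is $\ge 0$ and $\overline u$ is a viscosity subsolution. The supersolution case is verbatim the same with ``local maximum'' replaced by ``local minimum'' and $\ge$ by $\le$. With the boundary condition and uniqueness under \eqref{eq:B1}, $\overline u$ is \emph{the} viscosity solution of \eqref{HJBeps0}-\eqref{HJBepsbc0}; note this reproves Corollary \ref{thm:radial0} purely at the PDE level, as advertised.

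The main obstacle is not analytic depth but getting two bookkeeping steps exactly right: first, repackaging the ambient test function $\psi$ as an \emph{intrinsic} $C^1$ test function $\phi_0$ on $\Gamma$ whose constant normal extension agrees with $\psi$ on $\Gamma_\eta$, so that Lemma \ref{gradequiv} is literally applicable; and second, verifying that $B(\mathbf z_0,\mu)\mathbf a$ is tangent to $\Gamma_\eta$, which is what makes the surface Hamiltonian and the extended Hamiltonian match term by term rather than only up to an inequality. One must also fix $\epsilon$ small enough that $P_\Gamma$, $d_\Gamma$, $\Phi_\eta$ and the parallel surfaces $\Gamma_\eta$ have the regularity used and that $A_{\mathbf z_0}=A_{\mathbf x_0}$, and pin down the sign convention in the $\min$-formulation so that the sub- and supersolution inequalities come out in the correct directions.
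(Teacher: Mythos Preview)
Your approach is essentially identical to the paper's: you build the intrinsic test function on $\Gamma$ by transporting $\psi$ along $\Phi_\eta$ (the paper phrases this as ``the restriction to $\Gamma$ of the normal extension of $\psi|_{\Gamma_\eta}$'', which is exactly your $\phi_0=\psi\circ\Phi_\eta$), invoke Lemma~\ref{gradequiv} to relate $\nabla_\Gamma\phi_0(\mathbf x_0)$ and $B(\mathbf z_0,\mu)\nabla_{\Gamma_\eta}\phi_\eta(\mathbf z_0)$, and then use symmetry of $B$ together with $\mathbf a\perp\mathbf n$ to pass from the tangential gradient of $\psi|_{\Gamma_\eta}$ to the full ambient gradient $\nabla\psi(\mathbf z_0)$. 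That is precisely the paper's chain of identities \eqref{square}--\eqref{triangle}.

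One concrete slip: you have the viscosity inequalities reversed. With the Hamiltonian written as in \eqref{HJBman0}, the subsolution condition at a local maximum of $u-\phi_0$ reads
\[
\min_{\mathbf a\in A_{\mathbf x_0}}\bigl\{r(\mathbf x_0,\mathbf a)+\nabla_\Gamma\phi_0(\mathbf x_0)\cdot f(\mathbf x_0,\mathbf a)\,\mathbf a\bigr\}\le 0,
\]
not $\ge 0$, and this transfers to $\le 0$ for the extended Hamiltonian at $\mathbf z_0$, which is what you need for $\overline u$ to be a subsolution of \eqref{HJBeps0}. You correctly flag the sign convention as a point to pin down, but as written the displayed inequalities in your argument point the wrong way; swapping them (and correspondingly for the supersolution case) makes the proof go through verbatim and match the paper.
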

\begin{proof} 
We have  $u(\mathbf{x})=g(\mathbf{x})$ for $\mathbf{x}\in \mathcal{T}$ and $P_\Gamma \mathbf{z} \in \mathcal{T}$ if $\mathbf{z} \in  \overline{\mathcal{T}}$. Then for $\mathbf{z} \in \overline{\mathcal{T}}$, 
\begin{align*}
\overline{u}(\mathbf{z})&=u(P_\Gamma \mathbf{z})=g(P_\Gamma \mathbf{z})=\overline{g}(\mathbf{z}).
\end{align*}
Thus, $\overline{u}$ satisfies the boundary conditions \eqref{HJBepsbc0}. Next, we will show that $\overline{u}$ is a viscosity subsolution of \eqref{HJBeps0}.

Assume $\mathbf{z}_0 \in T_\epsilon$ and $\phi \in C^1(T_\epsilon)$ such that $\overline{u} -\phi$ has a strict local maximum at $\mathbf{z}_0$ and  $(\overline{u}-\phi)(\mathbf{z}_0)=0.$ We need to show that 
\begin{equation}\label{subsolution}
\min_{{\mathbf{a}}\in {A}_\mathbf{z}} \bigg\{ \overline{r}( \mathbf{z}_0,\mathbf{a})+\nabla \phi(\mathbf{z}_0) \cdot \overline{f}( \mathbf{z}_0,\mathbf{a})B(\mathbf{z}_0,\mu){\mathbf{a}} \bigg \}\leq0.
\end{equation}

Let $\eta=d_\Gamma(\mathbf{z}_0)$ and $\phi_0: \Gamma \to \mathbb{R}$ be the restriction to $\Gamma$ of the normal extension of $\restr{\phi}{\Gamma_\eta}$. Then we have that $\phi_0 \in C^1(\Gamma)$ and $\phi_\eta:= \restr{\phi}{\Gamma_\eta} \in C^1(\Gamma_\eta)$ such that ${\phi_\eta(\mathbf{z})=\phi_0(P_\Gamma \mathbf{z})}$ for $\mathbf{z}\in \Gamma_\eta$.

First, we will prove 
\begin{equation}\label{square}
B(\mathbf{z}_0,\mu)\nabla \phi(\mathbf{z}_0) \cdot  \mathbf{a} =B(\mathbf{z}_0,\mu)\nabla_{\Gamma_\eta} \phi_\eta(\mathbf{z}_0) \cdot  \mathbf{a}.
\end{equation}
 Since $\mathbf{a} \in A_{\mathbf{z}_0} \subset T_{\mathbf{z}_0}\Gamma$, we have $\mathbf{a} \cdot \mathbf{n}=0$ and $$B(\mathbf{z}_0,\mu)\mathbf{n}\otimes\mathbf{n} \nabla \phi(\mathbf{z}_0)\cdot \mathbf{a}=0.$$ Now, $\nabla_{\Gamma_\eta} \phi_\eta(\mathbf{z}_0)=(I-\mathbf{n}\otimes \mathbf{n})\nabla \phi(\mathbf{z}_0)$. Therefore, \eqref{square} holds.

Lemma \ref{gradequiv}  implies
\begin{equation}\label{triangle}
B(\mathbf{z}_0,\mu)\nabla_{\Gamma_\eta} \phi_\eta(\mathbf{z}_0)=\nabla_\Gamma \phi_0(P_\Gamma \mathbf{z}_0).
\end{equation}
Let $\mathbf{x}_0=P_\Gamma \mathbf{z}_0.$ Then using the fact that $B(\mathbf{z}_0,\mu)$ is symmetric, \eqref{square}, and \eqref{triangle}, we have 
\begin{align*}
\min_{{\mathbf{a}}\in {A}_{\mathbf{z}_0}} \bigg\{ \overline{r}( \mathbf{z}_0,\mathbf{a})+\nabla \phi(\mathbf{z}_0) \cdot &\overline{f}( \mathbf{z}_0,\mathbf{a})B(\mathbf{z}_0,\mu){\mathbf{a}} \bigg \}
\\&=\min_{{\mathbf{a}}\in {A}_{\mathbf{z}_0}} \bigg\{ \overline{r}( \mathbf{z}_0,\mathbf{a})+B(\mathbf{z}_0,\mu)\nabla \phi(\mathbf{z}_0) \cdot \overline{f}( \mathbf{z}_0,\mathbf{a}){\mathbf{a}} \bigg \}\\
&=\min_{{\mathbf{a}}\in {A}_{\mathbf{z}_0}} \bigg\{ \overline{r}( \mathbf{z}_0,\mathbf{a})+B(\mathbf{z}_0,\mu)\nabla_{\Gamma_\eta} \phi_\eta(\mathbf{z}_0) \cdot \overline{f}( \mathbf{z}_0,\mathbf{a}){\mathbf{a}} \bigg \}\\
&=\min_{{\mathbf{a}}\in {A}_{\mathbf{z}_0}} \bigg\{ \overline{r}( \mathbf{z}_0,\mathbf{a})+\nabla_{\Gamma} \phi_0(P_\Gamma \mathbf{z}_0) \cdot \overline{f}( \mathbf{z}_0,\mathbf{a}){\mathbf{a}} \bigg \}\\
&=\min_{{\mathbf{a}}\in {A}_{\mathbf{x}_0}} \bigg\{ {r}( \mathbf{x}_0,\mathbf{a})+\nabla_{\Gamma} \phi_0(\mathbf{x}_0) \cdot {f}( \mathbf{x}_0,\mathbf{a}){\mathbf{a}} \bigg \}.
\end{align*}

Now, $\mathbf{x}_0 \in \Gamma, \phi_0 \in C^1(\Gamma)$ such that $u-\phi_0$ has a strict local maximum at $\mathbf{x}_0$ and $$(u-\phi_0)(\mathbf{x}_0)=(\overline{u}-\phi)(\mathbf{z}_0)=0.$$ Since $u$ is a viscosity subsolution of \eqref{HJBman0}, $$\min_{{\mathbf{a}}\in {A}_{\mathbf{x}_0}} \bigg\{ {r}( \mathbf{x}_0,\mathbf{a})+\nabla_{\Gamma} \phi_0(\mathbf{x}_0) \cdot {f}( \mathbf{x}_0,\mathbf{a}){\mathbf{a}} \bigg \}\leq0.$$ Thus, \eqref{subsolution} holds. The same argument can be applied to show that $\overline{u}$ is a viscosity supersolution of \eqref{HJBman0}.
\end{proof}

In the next section, we will present a special case of the above formulations for when the speed and running cost functions are isotropic. The HJB equation on $\Gamma$ reduces to the Eikonal equation. A noteworthy revelation is that if we ensure that the extended speed and cost functions are isotropic as well, the control values in the extended optimal control problem do not have to be restricted to be in the tangent bundle  of $\Gamma$, i.e., the extended set of controls is $\widetilde{A}=S^{n-1}$. In this setup, the value function is the constant normal extension of the value function on $\Gamma.$ Before we proceeding, we present a counter example to show that $\overline{u}$ is not necessarily the value function of the extended control problem if the extended speed function $\overline{f}$ is anisotropic and $\widetilde{A}=S^{n-1}$.

\subsubsection*{Example}
Consider the following control problems on $\Gamma$ and $T_\epsilon$: Let $${\Gamma=(-1,1) \subset \mathbb{R}}$$ and $$\mathcal{T}=\{1\}.$$ Let $\epsilon>\sqrt{3}/3$, then $$T_\epsilon=(-1,1)\times(-\epsilon,\epsilon)$$ and $$\overline{\mathcal{T}}=\{1\} \times(-\epsilon,\epsilon).$$ For the control problem on $\Gamma$, the set of controls is $A=\{(-1,0),(1,0)\}$ and $\mathcal{A}$ is the usual set of admissible controls. Define $f:\Gamma \times A \to \mathbb{R}$ by $f(\mathbf{x},\mathbf{a})=\sqrt{3}$ and suppose that we have unit running cost, i.e., $r(\mathbf{x},\mathbf{a})=1$. Then it is clear that $u(0)=\sqrt{3}/3$ which is the minimum travel time from $\mathbf{x}=0$ to $\mathcal{T}$ while traveling at speed $\sqrt{3}$.

Now for the extended control problem, let the control values be the set $\widetilde{A}=S^1$. Then the set of admissible controls is given by $$\widetilde{\mathcal{A}}:=\{\mbox{ measurable functions } \widetilde{a}:[0,\infty) \to \widetilde{A}\}.$$ Define $\overline{f}: T_\epsilon \times \widetilde{\mathcal{A}} \to \mathbb{R}$ such that $\overline{f}(\mathbf{z},\widetilde{\mathbf{a}})=\overline{f}(\widetilde{\mathbf{a}})$ and the set $\{\overline{f}(\widetilde{\mathbf{a}}) \widetilde{\mathbf{a}}\; | \;\widetilde{\mathbf{a}} \in \widetilde{A}\}$ is the ellipse given by $x^2+\frac{y^2}{9}=1$ rotated at the origin clockwise by the angle $\pi/3$. Then $\overline{f}(\mathbf{a})=\sqrt{3}$ if $\mathbf{a} \in A$ so that $\overline{f}(\mathbf{z},\mathbf{a})=\overline{f}(P_\Gamma \mathbf{z}, \mathbf{a})$ for $\mathbf{a} \in A$.  Again, assume unit running cost.

Let $v$ be the value function for this extended control problem. If $v=\overline{u}$, then we should have $v((0,0))=u(0)$. However, note that $\overline{f}(\mathbf{z},(\sqrt{3}/2,1/2))=3$ and the Euclidean distance between $(0,0)$ and $(1,\sqrt{3}/3)$ is $2\sqrt{3}/3$. Thus, $$\overline{C}((0,0), \widetilde{a}(\cdot))=\frac{2\sqrt{3}}{9}$$ when $\widetilde{a}(\cdot) \equiv (\sqrt{3}/2,1/2)$. By the definition of the value function we must have $$v((0,0))\leq \frac{2\sqrt{3}}{9} < \frac{\sqrt{3}}{3}= u(0).$$ Therefore, $\overline{u}$ cannot be the value function for the extended control problem.

\section{The Isotropic Case}\label{sec:Eikonalcase}
We derive the optimal control problems and HJB equations corresponding to the case when the speed and cost functions are isotropic. As mentioned at the end of Section \ref{sec:extTeps}, we will show that value function on $T_\epsilon$ is the normal extension of the value function on $\Gamma$ even when we do not restrict the controls to the tangent space in the extended optimal control problem.
\subsection{The controlled dynamics on $\Gamma$ for isotropic speed and cost functions}
The optimal control problem formulation is exactly the same as in Section \ref{Gamma}, and \eqref{HJBman0} reduces to the Eikonal equation on the surface:
\begin{equation}\label{Eikman}
||\nabla_\Gamma u(\mathbf{x})||=\frac{r(\mathbf{x})}{f(\mathbf{x})}.
\end{equation}
We have the HJB equation
\begin{equation}\label{EikHJBman}
H(\mathbf{x}, \nabla_\Gamma u (\mathbf{x}))=r(\mathbf{x})-f(\mathbf{x})||\nabla_\Gamma u(\mathbf{x})||=0, \;\; \mathbf{x}\in \Gamma, 
\end{equation}
and the boundary condition is 
\begin{equation}\label{HJBmanbc} u(\mathbf{x})=g(\mathbf{x}), \; \; \mathbf{x}\in \mathcal{T}.
\end{equation}
Assuming \eqref{eq:A1}, the value function $u$ coincides with the unique viscosity solution of \eqref{EikHJBman}-\eqref{HJBmanbc}. We also have that $V(\mathbf{x})$ is strictly convex for all $\mathbf{x} \in \Gamma$. Thus, an optimal control is guaranteed. 

\subsection{Extension to $T_\epsilon$ in the isotropic case}
We derive the extended Hamiltonian, $\overline{H}$, on $T_\epsilon$ from the extended isotropic optimal control problem such that if $v: T_\epsilon \to \mathbb{R}$ is the unique viscosity solution of $$\overline{H}(\mathbf{x},\nabla v(\mathbf{z}))=0$$ with boundary conditions defined appropriately, then $$v(\mathbf{z})=u(P_\Gamma \mathbf{z}),$$ 
where $u$ is the solution to \eqref{EikHJBman}-\eqref{HJBmanbc}.

The notations $\mathbf{t}_1$, $\mathbf{t}_2$, $\mathbf{n}$, $\sigma_1$, and $\sigma_2$ are as in Section \ref{sec:extTeps}. The extended set of compact control values is $\widetilde{A}=S^{n-1}$, and the set of admissible controls is given by: $$\widetilde{\mathcal{A}}=\{\mbox{measurable functions: }\widetilde{\mathbf{a}}: [0,\infty) \to \widetilde{A}\}.$$ This is where the formulation differs from the previous section. We do not restrict the controls to belong to the tangent bundle of $\Gamma$. We extend the speed, cost, and exit time penalty functions  as above: $\overline{f}(\mathbf{z})=f(P_\Gamma \mathbf{z})$, $\overline{r}(\mathbf{z})=r(P_\Gamma \mathbf{z})$, and $\overline{g}(\mathbf{z})=g(P_\Gamma \mathbf{z})$. The dynamical system is given by 
\begin{align}\label{eq:isodyn}
\frac{d{\mathbf{y}}}{dt}(t)&=\overline{f}({\mathbf{y}}(t))B({\mathbf{y}}(t),\mu)\widetilde{\mathbf{a}}(t), \; t>0 \\ \label{eq:isodynbc}
{\mathbf{y}}(0)&=\mathbf{z},\; \mathbf{z} \in T_\epsilon,
\end{align} 
where $\widetilde{\mathbf{a}}(\cdot) \in \widetilde{\mathcal{A}}$. Admissible paths on $T_\epsilon$ with extended controls are solutions to \eqref{eq:isodyn}-\eqref{eq:isodynbc} denoted by $y_\mathbf{z}(t)$.  Note that the admissible paths are not restricted to the parallel surface, $\Gamma_\eta$, to which the initial point $\mathbf{z}$ belongs.

The total cost function associated to the initial state $\mathbf{z} \in T_\epsilon$ and control $\widetilde{\mathbf{a}}(\cdot) \in \widetilde{\mathcal{A}}$ is the same as in Section \ref{sec:extTeps}.
The value function $v:T_\epsilon \to \mathbb{R}$ is the minimal total cost to $\overline{\mathcal{T}}$ starting at $\mathbf{z}$ and is given by \begin{equation}\label{valueeps} v(\mathbf{z})=\inf_{\widetilde{\mathbf{a}}(\cdot) \in \widetilde{\mathcal{A}}} \overline{C}(\mathbf{z},\widetilde{\mathbf{a}}(\cdot)).\end{equation}

We have that \eqref{eq:A1} implies the following is true:

\begin{equation}\label{eq:C1}\begin{cases} \mbox{The mapping } (\mathbf{z},\widetilde{\mathbf{a}}) \mapsto \overline{f}( \mathbf{z})B(\mathbf{z},\mu)\widetilde{\mathbf{a}} \mbox{ is Lipschitz continuous}\\  \mbox{in the first argument and, } \overline{r} \mbox{ is Lipschitz continuous.}  \\
\overline{g} \mbox{ is lower semicontinuous and } \min_\mathcal{T} \overline{g} < \infty.\\
 0<R_1\leq \overline{r}(\mathbf{z}) \leq R_2,  \; \mbox{for all } \mathbf{z} \in T_\epsilon. \\
\mbox{There exists constants } \overline{F}_1,\overline{F}_2 \mbox{ such that: } \\0<\overline{F}_1\leq ||\overline{f}(\mathbf{z})B(\mathbf{z},\mu)\widetilde{\mathbf{a}}|| \leq \overline{F}_2, \; \mbox{for all } \mathbf{z} \in T_\epsilon \mbox{ and } \widetilde{\mathbf{a}} \in \widetilde{A}.  \tag{C1}
\end{cases}\end{equation}
We have that \eqref{eq:C1} implies the value function, $v$, coincides with the unique viscosity solution of \eqref{EikHJBman}-\eqref{HJBmanbc} \cite{BardiDolcetta97}. We also have that $\overline{V}(\mathbf{z})=\{\overline{f}(\mathbf{z})\widetilde{\mathbf{a}}/\overline{r}(\mathbf{z})\;|\; \widetilde{\mathbf{a}}\in \widetilde{A} \}$ is strictly convex for all $\mathbf{z} \in \Gamma$. Thus, an optimal control is guaranteed. 

We now prove that the value function, $v$, is a constant extension of a function on $\Gamma$. However, unlike in the previous section, it is not immediate from the following proofs that $v=\overline{u}$, where $u$ is the value function on $\Gamma$.
\begin{theorem}\label{radial}
Given $\widetilde{\mathbf{a}}(\cdot) \in \widetilde{\mathcal{A}}$ and $\mathbf{z}_1,\mathbf{z}_2 \in T_\epsilon$ such that $P_\Gamma \mathbf{z}_1=P_\Gamma \mathbf{z}_2$,  let $${{\mathbf{y_{\mathbf{z}_1}}}:[0,\infty) \to T_\epsilon}$$ and $${{\mathbf{y}_{\mathbf{z}_2}}:[0,\infty) \to T_\epsilon}$$   solve \eqref{eq:isodyn}  with  ${\mathbf{y}_{\mathbf{z}_1}}(0)=\mathbf{z}_1$ and  ${\mathbf{y}_{\mathbf{z}_2}}(0)=\mathbf{z}_2$, respectively.  Then $P_\Gamma \mathbf{y}_{\mathbf{z}_1}(t)=P_\Gamma {\mathbf{y}}_{\mathbf{z}_2}(t)$ for all $t \geq 0$.

\end{theorem}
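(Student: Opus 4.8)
The plan is to mimic the proof of Theorem~\ref{radial0}: project each trajectory back onto $\Gamma$ via $P_\Gamma$, show the two projected curves solve one and the same initial value problem on $\Gamma$, and conclude by uniqueness. The one genuinely new feature, compared with Theorem~\ref{radial0}, is that the control $\widetilde{\mathbf{a}}(t)$ now has a nonzero normal component (it ranges over all of $S^{n-1}$, not the tangent bundle), so the trajectories $\mathbf{y}_{\mathbf{z}_i}(\cdot)$ no longer stay on the parallel surface $\Gamma_\eta$ through their starting point. This does not obstruct the argument, because the relevant projection will annihilate the normal part of $\widetilde{\mathbf{a}}(t)$ anyway; it does, however, mean that the projected curve is not literally an admissible path on $\Gamma$ in the sense of Section~\ref{Gamma} (the projected control need not have unit length), which is precisely why the identity $v=\overline{u}$ will require an argument beyond this theorem.

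Concretely, I would set $\mathbf{y}_i(t):=P_\Gamma\mathbf{y}_{\mathbf{z}_i}(t)$ for $i=1,2$. Since $\mathbf{y}_{\mathbf{z}_i}$ takes values in $T_\epsilon$ by hypothesis and $P_\Gamma$ is $C^1$ there for $\epsilon$ small, each $\mathbf{y}_i$ is absolutely continuous with values in $\Gamma$, and the chain rule gives, for a.e.\ $t$,
\[
\mathbf{y}_i'(t)=P_\Gamma'(\mathbf{y}_{\mathbf{z}_i}(t))\,\overline{f}(\mathbf{y}_{\mathbf{z}_i}(t))\,B(\mathbf{y}_{\mathbf{z}_i}(t),\mu)\,\widetilde{\mathbf{a}}(t).
\]
Using the singular value decomposition $P_\Gamma'=U\Sigma U^{T}$ with $U=[\,\mathbf{t}_1\ \mathbf{t}_2\ \mathbf{n}\,]$ and $\Sigma=\mathrm{diag}(\sigma_1,\sigma_2,0)$, together with Definition~\ref{defn:Btensor} and the orthonormality relations for $\mathbf{t}_1,\mathbf{t}_2,\mathbf{n}$ — the exact computation carried out in the proof of Theorem~\ref{radial0} — one obtains $P_\Gamma'B(\mathbf{z},\mu)=\mathbf{t}_1\otimes\mathbf{t}_1+\mathbf{t}_2\otimes\mathbf{t}_2$. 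Evaluated at $\mathbf{z}=\mathbf{y}_{\mathbf{z}_i}(t)$ this is the orthogonal projection onto the tangent plane of $\Gamma_{\eta}$ at $\mathbf{y}_{\mathbf{z}_i}(t)$, which for $\epsilon$ small coincides as a subspace with $T_{\mathbf{y}_i(t)}\Gamma$ (because $\nabla d_\Gamma$ is constant along normals); denote it $\Pi_{\mathbf{y}_i(t)}$. Since also $\overline{f}(\mathbf{y}_{\mathbf{z}_i}(t))=f(P_\Gamma\mathbf{y}_{\mathbf{z}_i}(t))=f(\mathbf{y}_i(t))$, this yields
\[
\mathbf{y}_i'(t)=f(\mathbf{y}_i(t))\,\Pi_{\mathbf{y}_i(t)}\,\widetilde{\mathbf{a}}(t),\qquad \mathbf{y}_i(0)=P_\Gamma\mathbf{z}_i.
\]
The crucial observation is that the right-hand side depends on the trajectory only through $\mathbf{y}_i(t)\in\Gamma$ and (measurably) on $t$: the offset $\eta(t)=d_\Gamma(\mathbf{y}_{\mathbf{z}_i}(t))$ has dropped out, exactly as in the anisotropic proof.

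It then remains to invoke uniqueness. The field $F(\mathbf{y},t):=f(\mathbf{y})\,\Pi_{\mathbf{y}}\,\widetilde{\mathbf{a}}(t)$ on $\Gamma\times[0,\infty)$ is measurable in $t$ and Lipschitz in $\mathbf{y}$ uniformly in $t$: $f$ is Lipschitz and bounded by \eqref{eq:A1}, $\mathbf{y}\mapsto\Pi_{\mathbf{y}}$ is smooth on the compact surface $\Gamma$, and $\|\widetilde{\mathbf{a}}(t)\|=1$. Hence the above initial value problem has a unique (absolutely continuous) solution by the Carath\'eodory uniqueness theorem. Since $P_\Gamma\mathbf{z}_1=P_\Gamma\mathbf{z}_2$, the curves $\mathbf{y}_1$ and $\mathbf{y}_2$ solve the same initial value problem, so $\mathbf{y}_1(t)=\mathbf{y}_2(t)$, i.e.\ $P_\Gamma\mathbf{y}_{\mathbf{z}_1}(t)=P_\Gamma\mathbf{y}_{\mathbf{z}_2}(t)$, for all $t\ge0$.

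I expect the main obstacle to be the geometric bookkeeping around the projection $\mathbf{t}_1\otimes\mathbf{t}_1+\mathbf{t}_2\otimes\mathbf{t}_2$: one must be careful to identify the tangent plane of the parallel surface at $\mathbf{y}_{\mathbf{z}_i}(t)$ with $T_{\mathbf{y}_i(t)}\Gamma$, so that the induced vector field really is a well-defined, $\mathbf{y}$-only (hence Lipschitz) vector field on $\Gamma$, and then to note that uniqueness applies even though $\widetilde{\mathbf{a}}$ is only measurable in time and $\Pi_{\mathbf{y}}\widetilde{\mathbf{a}}$ is not of unit length. Everything else — the SVD manipulation and the reduction to the surface ODE — is a routine repetition of the computation in the proof of Theorem~\ref{radial0}.
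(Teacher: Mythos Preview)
Your proposal is correct and is exactly the approach the paper takes: the paper does not write out a separate proof but states that the argument is ``analogous to the proof of Theorem~\ref{radial0}'' with the sole difference that $(\mathbf{t}_1\otimes\mathbf{t}_1+\mathbf{t}_2\otimes\mathbf{t}_2)\widetilde{\mathbf{a}}(t)\neq\widetilde{\mathbf{a}}(t)$ in general, so the projected curves need not be admissible paths on $\Gamma$ in the sense of Section~\ref{Gamma}. Your write-up is in fact more explicit than the paper on two points it leaves implicit --- that the tangential projection depends only on $P_\Gamma\mathbf{y}_{\mathbf{z}_i}(t)$ (via $\mathbf{n}$ being constant along normals), and that uniqueness for the resulting ODE holds in the Carath\'eodory sense --- both of which are needed and correctly argued.
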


The proof of Theorem \ref{radial} is analogous to the proof of Theorem \ref{radial0} except the control $\widetilde{\mathbf{a}}(\cdot)$ belongs to the extended control space, $\widetilde{\mathcal{A}}$. A consequence of the control belonging to the extended space is that now the projected paths $P_\Gamma \mathbf{y}_{\mathbf{z}}(\cdot)$ may not solve \eqref{dynsys0}. This is due to the fact that $(\mathbf{t}_1 \otimes \mathbf{t}_1+\mathbf{t}_2 \otimes \mathbf{t}_2 )\widetilde{\mathbf{a}}(t)\not =\widetilde{\mathbf{a}}(t)$ if $\widetilde{\mathbf{a}}(t) \not \in T_{P_\Gamma \mathbf{y}_{\mathbf{z}}(t)}$ where $\mathbf{t}_1$ and $\mathbf{t}_2$ are the basis tangent vectors of $T_{P_\Gamma \mathbf{y}_{\mathbf{z}}(t)}.$  We now prove that the extended value function in the isotropic case is constant along the normals of $\Gamma$.

\begin{corollary}\label{thm:radial}
If $P_\Gamma \mathbf{z}_1=P_\Gamma\mathbf{z}_2$ for $\mathbf{z}_1,\mathbf{z}_2 \in T_\epsilon$, then $v(\mathbf{z}_1)=v(\mathbf{z}_2)$.
\end{corollary}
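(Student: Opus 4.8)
The plan is to reduce this statement to Theorem \ref{radial}, exactly as Corollary \ref{thm:radial0} was reduced to Theorem \ref{radial0}, with the only real difference being that here the admissible control space $\widetilde{\mathcal{A}}$ is the same for every base point, so there is no issue matching control sets. First I would fix $\mathbf{z}_1,\mathbf{z}_2\in T_\epsilon$ with $P_\Gamma\mathbf{z}_1 = P_\Gamma\mathbf{z}_2$ and let $\widetilde{\mathbf{a}}(\cdot)\in\widetilde{\mathcal{A}}$ be arbitrary. Let $\mathbf{y}_{\mathbf{z}_1}(\cdot)$ and $\mathbf{y}_{\mathbf{z}_2}(\cdot)$ be the corresponding admissible paths on $T_\epsilon$ solving \eqref{eq:isodyn}-\eqref{eq:isodynbc} with these initial points and this common control. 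By Theorem \ref{radial}, $P_\Gamma\mathbf{y}_{\mathbf{z}_1}(t) = P_\Gamma\mathbf{y}_{\mathbf{z}_2}(t)$ for all $t\ge 0$.

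Next I would compare the two total costs $\overline{C}(\mathbf{z}_1,\widetilde{\mathbf{a}}(\cdot))$ and $\overline{C}(\mathbf{z}_2,\widetilde{\mathbf{a}}(\cdot))$. Since $\overline{r}(\mathbf{z},\widetilde{\mathbf{a}}) = r(P_\Gamma\mathbf{z})$ depends on $\mathbf{z}$ only through $P_\Gamma\mathbf{z}$ (here $r$ is isotropic, so there is no $\widetilde{\mathbf{a}}$-dependence either), the integrand $\overline{r}(\mathbf{y}_{\mathbf{z}_1}(t)) = r(P_\Gamma\mathbf{y}_{\mathbf{z}_1}(t)) = r(P_\Gamma\mathbf{y}_{\mathbf{z}_2}(t)) = \overline{r}(\mathbf{y}_{\mathbf{z}_2}(t))$ agrees for all $t$. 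Likewise $\overline{\mathcal{T}} = \{\mathbf{z}\in T_\epsilon : P_\Gamma\mathbf{z}\in\mathcal{T}\}$ is $P_\Gamma$-saturated, so the exit times coincide: $\overline{T}_1 := \min\{t : \mathbf{y}_{\mathbf{z}_1}(t)\in\overline{\mathcal{T}}\} = \min\{t : P_\Gamma\mathbf{y}_{\mathbf{z}_1}(t)\in\mathcal{T}\} = \min\{t : P_\Gamma\mathbf{y}_{\mathbf{z}_2}(t)\in\mathcal{T}\} = \overline{T}_2$. Finally $\overline{g}(\mathbf{y}_{\mathbf{z}_1}(\overline{T}_1)) = g(P_\Gamma\mathbf{y}_{\mathbf{z}_1}(\overline{T}_1)) = g(P_\Gamma\mathbf{y}_{\mathbf{z}_2}(\overline{T}_2)) = \overline{g}(\mathbf{y}_{\mathbf{z}_2}(\overline{T}_2))$. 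Combining these three facts gives $\overline{C}(\mathbf{z}_1,\widetilde{\mathbf{a}}(\cdot)) = \overline{C}(\mathbf{z}_2,\widetilde{\mathbf{a}}(\cdot))$ for every $\widetilde{\mathbf{a}}(\cdot)\in\widetilde{\mathcal{A}}$.

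Since the admissible control space $\widetilde{\mathcal{A}}$ is the same set of measurable functions $[0,\infty)\to\widetilde{A}$ regardless of the initial point, taking the infimum over $\widetilde{\mathbf{a}}(\cdot)\in\widetilde{\mathcal{A}}$ on both sides yields
\begin{align*}
v(\mathbf{z}_1) = \inf_{\widetilde{\mathbf{a}}(\cdot)\in\widetilde{\mathcal{A}}} \overline{C}(\mathbf{z}_1,\widetilde{\mathbf{a}}(\cdot)) = \inf_{\widetilde{\mathbf{a}}(\cdot)\in\widetilde{\mathcal{A}}} \overline{C}(\mathbf{z}_2,\widetilde{\mathbf{a}}(\cdot)) = v(\mathbf{z}_2).
\end{align*}
I expect the only delicate point to be the appeal to Theorem \ref{radial}: one must be sure that the path $\mathbf{y}_{\mathbf{z}_i}(\cdot)$ in \eqref{eq:isodyn} is defined for all $t\ge 0$ and stays in $T_\epsilon$, so that $P_\Gamma$ is applied to points where it is differentiable — this is guaranteed by \eqref{eq:C1} (the bounds giving existence) together with the standing smallness assumption on $\epsilon$. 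Everything else is the same bookkeeping as in Corollary \ref{thm:radial0}. Note in particular that this argument does \emph{not} show $v = \overline{u}$; it only shows $v$ is a constant-normal extension of \emph{some} function on $\Gamma$, and identifying that function with $u$ requires the separate argument carried out later (since the projected paths need not be admissible paths on $\Gamma$).
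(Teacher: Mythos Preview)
Your proposal is correct and follows essentially the same approach as the paper: apply Theorem~\ref{radial} to get $P_\Gamma\mathbf{y}_{\mathbf{z}_1}(t)=P_\Gamma\mathbf{y}_{\mathbf{z}_2}(t)$, use the $P_\Gamma$-dependence of $\overline{r}$, $\overline{g}$, and $\overline{\mathcal{T}}$ to conclude $\overline{C}(\mathbf{z}_1,\widetilde{\mathbf{a}}(\cdot))=\overline{C}(\mathbf{z}_2,\widetilde{\mathbf{a}}(\cdot))$ for every $\widetilde{\mathbf{a}}(\cdot)\in\widetilde{\mathcal{A}}$, and take the infimum. Your closing remark that this only shows $v$ is constant along normals (not yet that $v=\overline{u}$) is exactly the point the paper makes immediately after the corollary.
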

\begin{proof}
Let $\widetilde{\mathbf{a}}(\cdot)\in \widetilde{\mathcal{A}}$. We have  
\begin{align*}
\overline{C}(\mathbf{z}_1,\widetilde{\mathbf{a}}(\cdot))=\int_0^{\overline{T}_1} \overline{r}( \mathbf{{y}}_{\mathbf{z}_1}(t)) dt +\overline{g}( {\mathbf{y}_{\mathbf{z}_1}}(\overline{T}_1))
\end{align*}
and 
\begin{align*}
\overline{C}(\mathbf{z}_2,\widetilde{\mathbf{a}}(\cdot))=\int_0^{\overline{T}_2} \overline{r}( \mathbf{{y}}_{\mathbf{z}_2}(t)) dt +\overline{g}( {\mathbf{y}_{\mathbf{z}_2}}(\overline{T}_2)),
\end{align*}  where ${\mathbf{y}}_{\mathbf{z}_1}(\cdot)$ and  ${\mathbf{y}}_{\mathbf{z}_2}(\cdot)$ are admissible paths on $T_\epsilon$ with extended control, $\widetilde{\mathbf{a}}(\cdot)\in \widetilde{\mathcal{A}}$, and ${{\mathbf{y}}_{\mathbf{z}_1}(0)=\mathbf{z}_1}$, ${\mathbf{y}}_{\mathbf{z}_2}(0)=\mathbf{z}_2$ where ${P_\Gamma \mathbf{z}_1=P_\Gamma\mathbf{z}_2}$.

Theorem \ref{radial} implies that $P_\Gamma \mathbf{y}_{\mathbf{z}_1}(t) = P_\Gamma  \mathbf{y}_{\mathbf{z}_2}(t)$ for all $t \geq 0$.  
Since $$\overline{\mathcal{T}} = \{\mathbf{z} \in T_\epsilon \; | \;P_\Gamma \mathbf{z} \in \mathcal{T} \},$$
we have 
\begin{align*}
\min\{t \; | \; P_\Gamma \mathbf{y}_{\mathbf{z}_1}(t)=P_\Gamma \mathbf{y}_{\mathbf{z}_2}(t) \in \mathcal{T}\} & = \min\{ t \; | \; \mathbf{y}_{\mathbf{z}_1}(t) \in \overline{\mathcal{T}} \} =  \min\{ t \; | \; \mathbf{y}_{\mathbf{z}_2}(t) \in \overline{\mathcal{T}} \}.
\end{align*} 
Therefore, $\overline{T}_1=\overline{T}_2$ and  $\overline C(\mathbf{z}_1, \widetilde{\mathbf{a}}(\cdot))=\overline C(\mathbf{z}_2, \widetilde{\mathbf{a}}(\cdot))$ for $\widetilde{\mathbf{a}}(\cdot) \in \widetilde{\mathcal{A}}$. Now, 
\begin{align*}
v(\mathbf{z}_1)&=\inf_{\widetilde{\mathbf{a}}(\cdot) \in \widetilde{\mathcal{A}}} \overline{C}(\mathbf{z}_1,\widetilde{\mathbf{a}}(\cdot)) =\inf_{\widetilde{\mathbf{a}}(\cdot) \in \widetilde{\mathcal{A}}} \overline{C}(\mathbf{z}_2,\widetilde{\mathbf{a}}(\cdot)) =v(\mathbf{z}_2).
\end{align*}

\end{proof}

In the above proof, it is not immediate that we can take the infimum over $\mathcal{A} \subset \widetilde{\mathcal{A}},$ which would imply that $v$ is the normal extension of $u$, the value function on $\Gamma$. We will use the corresponding HJB equations and Corollary \ref{thm:radial} to show that $v=\overline{u}$.

We define the HJB equation on $T_\epsilon$ associated the value function $v$. Again, the dynamic programming principle states that for sufficiently small $\tau>0$ we have  
\begin{equation}\label{bellman2}
v(\mathbf{z})=\inf_{\widetilde{\mathbf{a}}(\cdot) \in \widetilde{\mathcal{A}} } \bigg \{ \int_0^\tau \overline{r}( \mathbf{y}_\mathbf{z}(t))dt +v(\mathbf{y}_\mathbf{z}(\tau))\bigg \}.
\end{equation}
We get the Hamilton-Jacobi-Bellman equation on $T_\epsilon$:
\begin{equation}\label{HJBeps}
\min_{\widetilde{\mathbf{a}}\in \widetilde{A}} \bigg\{ \overline{r}( \mathbf{z})+\nabla v(\mathbf{z}) \cdot B(\mathbf{z},\mu)\overline{f}( \mathbf{z})\widetilde{\mathbf{a}} \bigg \}=0.
\end{equation}
Since the speed and cost functions are isotropic, \eqref{HJBeps} reduces to an anisotropic Eikonal equation:
\begin{equation}\label{epsEik}
\overline{H}(\mathbf{z},\nabla v(\mathbf{z}))=\overline{r}( \mathbf{z})-\overline{f}( \mathbf{z})||B(\mathbf{z},\mu) \nabla v(\mathbf{z})||=0.
\end{equation}
The boundary condition is 
\begin{equation}\label{HJBepsbc} v(\mathbf{z})=\overline{g}( \mathbf{z}), \; \; \mathbf{z}\in \overline{\mathcal{T}}.\end{equation}

Since the value function coincides with the unique viscosity solution, Corollary \ref{thm:radial} implies that the viscosity solution of \eqref{epsEik}-\eqref{HJBepsbc} is a constant along the normals of $\Gamma$, i.e., there is a function $w:\Gamma \to \mathbb{R}$ such that for $\mathbf{z} \in T_\epsilon$, $$v(\mathbf{z})=\overline{w}(\mathbf{z}):=w(P_\Gamma \mathbf{z}).$$ We now prove that $w=u$ where $u$ is the viscosity solution of \eqref{EikHJBman}-\eqref{HJBmanbc}.

\begin{theorem} 
If $u:\Gamma \to \mathbb{R}$ is the viscosity solution to \eqref{EikHJBman}-\eqref{HJBmanbc}, then the constant normal extension of $u$, $\overline{u}: T_\epsilon \to \mathbb{R},$ is the viscosity solution to \eqref{epsEik}-\eqref{HJBepsbc}.
\end{theorem}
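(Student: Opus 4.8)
The plan is to mirror the structure of the proof of Theorem~\ref{generalmain}, but now exploiting the isotropic structure so that the extended control set $\widetilde A = S^{n-1}$ causes no harm. First I would dispatch the boundary condition exactly as before: for $\mathbf{z}\in\overline{\mathcal{T}}$ we have $P_\Gamma\mathbf{z}\in\mathcal{T}$, so $\overline{u}(\mathbf{z}) = u(P_\Gamma\mathbf{z}) = g(P_\Gamma\mathbf{z}) = \overline{g}(\mathbf{z})$. The bulk of the work is to verify that $\overline u$ is a viscosity sub- and supersolution of the extended Eikonal equation \eqref{epsEik} on $T_\epsilon\setminus\overline{\mathcal{T}}$.

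For the subsolution property, I would take $\mathbf{z}_0\in T_\epsilon$ and $\phi\in C^1(T_\epsilon)$ with $\overline u - \phi$ having a strict local maximum at $\mathbf{z}_0$ and $(\overline u - \phi)(\mathbf{z}_0) = 0$, and I must show $\overline{r}(\mathbf{z}_0) - \overline{f}(\mathbf{z}_0)\,\|B(\mathbf{z}_0,\mu)\nabla\phi(\mathbf{z}_0)\| \le 0$. Set $\eta = d_\Gamma(\mathbf{z}_0)$, $\mathbf{x}_0 = P_\Gamma\mathbf{z}_0$, and let $\phi_0\in C^1(\Gamma)$ be the restriction to $\Gamma$ of the constant normal extension of $\restr{\phi}{\Gamma_\eta}$, so that $\phi_\eta := \restr{\phi}{\Gamma_\eta}$ satisfies $\phi_\eta(\mathbf{z}) = \phi_0(P_\Gamma\mathbf{z})$. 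Because $\overline u = \overline w$ is already known (via Corollary~\ref{thm:radial}) to be constant along normals, the function $u - \phi_0$ — or more precisely $w-\phi_0$, which we will identify with $u-\phi_0$ at the end — has a local maximum at $\mathbf{x}_0$ with value $0$, and $u$ is a viscosity subsolution of \eqref{EikHJBman}, giving $r(\mathbf{x}_0) - f(\mathbf{x}_0)\|\nabla_\Gamma\phi_0(\mathbf{x}_0)\| \le 0$. It remains to relate the two gradient norms: by Lemma~\ref{gradequiv}, $\nabla_\Gamma\phi_0(\mathbf{x}_0) = B(\mathbf{z}_0,\mu)\nabla_{\Gamma_\eta}\phi_\eta(\mathbf{z}_0)$, and since $\nabla_{\Gamma_\eta}\phi_\eta(\mathbf{z}_0) = (I-\mathbf{n}\otimes\mathbf{n})\nabla\phi(\mathbf{z}_0)$ while $B(\mathbf{z}_0,\mu)(\mathbf{n}\otimes\mathbf{n}) = \mu\,\mathbf{n}\otimes\mathbf{n}$ acts only in the normal direction, one checks that $\|B(\mathbf{z}_0,\mu)\nabla\phi(\mathbf{z}_0)\|^2 = \|B(\mathbf{z}_0,\mu)\nabla_{\Gamma_\eta}\phi_\eta(\mathbf{z}_0)\|^2 + \mu^2(\nabla\phi(\mathbf{z}_0)\cdot\mathbf{n})^2 \ge \|\nabla_\Gamma\phi_0(\mathbf{x}_0)\|^2$. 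Hence $\overline{r}(\mathbf{z}_0) - \overline{f}(\mathbf{z}_0)\|B(\mathbf{z}_0,\mu)\nabla\phi(\mathbf{z}_0)\| \le r(\mathbf{x}_0) - f(\mathbf{x}_0)\|\nabla_\Gamma\phi_0(\mathbf{x}_0)\| \le 0$, as desired.

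The supersolution direction is where I expect the real obstacle, and it is precisely here that the sign of the inequality above runs the wrong way: for a test function touching $\overline u$ from below we would get $\|B(\mathbf{z}_0,\mu)\nabla\phi(\mathbf{z}_0)\| \ge \|\nabla_\Gamma\phi_0(\mathbf{x}_0)\|$, which does not by itself yield $\overline r(\mathbf{z}_0) - \overline f(\mathbf{z}_0)\|B\nabla\phi\| \le 0$ combined with the supersolution inequality for $u$. The resolution must use that $\overline u$ is \emph{constant along normals}: for a $C^1$ test function $\phi$ with $\overline u - \phi$ having a local minimum at $\mathbf{z}_0$, the normal derivative $\nabla\phi(\mathbf{z}_0)\cdot\mathbf{n}$ must vanish (otherwise $\phi$ could not stay below the normal-constant function $\overline u$ near $\mathbf{z}_0$ in the normal direction — one perturbs $\mathbf{z}_0$ along the normal and uses that $\overline u$ is unchanged while $\phi$ strictly decreases on one side). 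Once $\nabla\phi(\mathbf{z}_0)\cdot\mathbf{n} = 0$ we recover equality $\|B(\mathbf{z}_0,\mu)\nabla\phi(\mathbf{z}_0)\| = \|B(\mathbf{z}_0,\mu)\nabla_{\Gamma_\eta}\phi_\eta(\mathbf{z}_0)\| = \|\nabla_\Gamma\phi_0(\mathbf{x}_0)\|$, and the supersolution inequality for $u$ transfers verbatim. I would then close by noting $w = u$: the identity $v = \overline w$ from Corollary~\ref{thm:radial} together with the sub/supersolution property just established shows $\overline w$ solves \eqref{epsEik}--\eqref{HJBepsbc}; restricting to $\Gamma$ and using uniqueness of the viscosity solution of \eqref{EikHJBman}--\eqref{HJBmanbc} forces $w = u$, so $v = \overline u$ and $\overline u$ is the (unique) viscosity solution of \eqref{epsEik}--\eqref{HJBepsbc}.
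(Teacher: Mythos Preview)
Your core argument is correct and takes a genuinely different route from the paper. You verify directly that $\overline{u}$ is a viscosity sub- and supersolution of \eqref{epsEik} at every $\mathbf{z}_0\in T_\epsilon$: the subsolution inequality follows from the orthogonal splitting $\|B\nabla\phi\|^2 = \|B\nabla_{\Gamma_\eta}\phi_\eta\|^2 + \mu^2(\nabla\phi\cdot\mathbf{n})^2 \ge \|\nabla_\Gamma\phi_0\|^2$, and for the supersolution you use the neat observation that any $C^1$ test function touching the normal-constant $\overline{u}$ at an interior extremum must have vanishing normal derivative there, turning the inequality into an equality. (The same observation would have worked in the subsolution case too, making the splitting inequality unnecessary.) Once $\overline{u}$ is a viscosity solution, uniqueness from \eqref{eq:C1} finishes the proof.

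The paper goes in the opposite direction. It starts from Corollary~\ref{thm:radial}, which says the viscosity solution $v$ of \eqref{epsEik}--\eqref{HJBepsbc} is already constant along normals, $v=\overline{w}$ for some $w:\Gamma\to\mathbb{R}$, and then transfers the viscosity property of $\overline{w}$ on $T_\epsilon$ down to show that $w$ is a viscosity solution of \eqref{EikHJBman}--\eqref{HJBmanbc} on $\Gamma$; uniqueness on $\Gamma$ then forces $w=u$. The algebra there is simpler because the test functions are evaluated at points $\mathbf{x}_0\in\Gamma$ where $\sigma_1=\sigma_2=1$ and $\nabla\overline\phi\in T_{\mathbf{x}_0}\Gamma$, so no appeal to Lemma~\ref{gradequiv} or the normal-derivative trick is needed. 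Your approach, by contrast, does not use Corollary~\ref{thm:radial} at all and is more self-contained on the PDE side.

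Your closing paragraph, however, is muddled: having shown $\overline{u}$ is a viscosity solution, you are done by uniqueness on $T_\epsilon$---there is no need to introduce $w$ or restrict back to $\Gamma$. The parenthetical ``or more precisely $w-\phi_0$'' in the subsolution step is similarly confused; it is $u-\phi_0$ that has the local maximum (since $\overline{u}-\phi$ restricted to $\Gamma_\eta$ pulls back to $u-\phi_0$ on $\Gamma$ via the diffeomorphism $P_\Gamma|_{\Gamma_\eta}$), and $w$ plays no role in your argument. Drop those and the proof stands cleanly on its own.
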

\begin{proof} 
Just as in Theorem \ref{generalmain}, $\overline{u}$ satisfies the boundary conditions \eqref{HJBepsbc}.  Assume for contradiction that $\overline{u}$ is not the viscosity solution to \eqref{epsEik}. But from Corollary \ref{thm:radial} we have that viscosity solution of \eqref{epsEik}-\eqref{HJBepsbc}, $v=\overline{w}$ for some $w \in C^1(\Gamma).$ Therefore, if $\overline{u}$ is not the unique viscosity solution then we have $u \not = w$. For contradiction, we will show that $w$ is the viscosity solution of \eqref{EikHJBman} and thus, $u=w$.

Let $\mathbf{x}_0 \in \Gamma$ and $\phi \in C^1(\Gamma)$ such that $w-\phi$ has a local maximum at $\mathbf{x}_0$. Consider the normal extensions of $w$ and $\phi$, then $\overline{w}-\overline{\phi}$ has a local max at $\mathbf{x}_0$. Since $\overline{w}$ is a viscosity subsolution to \eqref{epsEik} and $\overline{\phi} \in C^1(T_\epsilon)$, we have that 
\begin{equation}
\overline{r}( \mathbf{x}_0)-\overline{f}( \mathbf{x}_0)||B(\mathbf{x}_0,\mu) \nabla \overline{\phi}(\mathbf{x}_0)|| \leq 0.
\end{equation} 
Since $\mathbf{x}_0 \in \Gamma$, $\sigma_1=\sigma_2=1$ and $\nabla \overline{\phi}(\mathbf{x}_0) \in T_{\mathbf{x}_0}\Gamma$. Therefore, $$B(\mathbf{x}_0,\mu)=\mathbf{t}_1\otimes\mathbf{t}_1+\mathbf{t}_2\otimes \mathbf{t}_2+\mu \mathbf{n}\otimes\mathbf{n},$$ and we have that $B(\mathbf{x}_0,\mu)\nabla \overline{\phi}(\mathbf{x}_0)=\nabla \overline{\phi}(\mathbf{x}_0)=\nabla_\Gamma \phi(\mathbf{x}_0)$.  Thus, 
\begin{equation}
r( \mathbf{x}_0)-f( \mathbf{x}_0)||\nabla_\Gamma {\phi}(\mathbf{x}_0)|| \leq 0,
\end{equation} 
and $w$ is a viscosity subsolution. The same argument can be applied to show that $w$ is a viscosity supersolution. Therefore, $w$ is the viscosity solution to \eqref{EikHJBman} and hence $w=u$.
\end{proof}

To summarize, we have shown that we may appropriately extend the control space in the case of isotropic speed and running costs. This has the implication that even though the admissible paths on $T_\epsilon$ have the ``choice" to leave the tangent space of $\Gamma$, the optimal paths in the extended control problem remain on or parallel to $\Gamma.$

\section{Numerical implementation and simulations}\label{sec:sim}

\subsection{Numerical setup}\label{sec:setup}
In the implementation of our new framework, we use a uniform Cartesian grids and denote the grid step size by $h$. $T_\epsilon^{h}$ denotes the part of the grid in the narrow band of radius $\epsilon$ around the given surface. 

We use the Lax-Friedrichs fast sweeping methods in \cite{Kaoetal04} and a high order version in \cite{Zhangetal06} for our simulations. It is also possible to an upwind scheme generalized from \cite{Tsaietal03}. Unless we mention otherwise, we use the standard first order finite differencing in the approximation of the partial derivatives for the Lax-Friedrichs numerical Hamiltonians.\footnote{ All code used to produce the numerical simulations can be found at https://github.com/lindsmart/MartinTsaiExtHJB.}

We present several examples of our new formulation performed on surfaces of co-dimension one in three dimensions. For the first three examples, we compute the solution to the Eikonal equation on the surface. As shown in Section \ref{sec:Eikonalcase}, the equivalent equation on $T_\epsilon$ is \begin{align}\label{eq:exEik} ||B(\mathbf{z},\mu) \nabla v(\mathbf{z})||&=1, \;\; \mathbf{z} \in T_\epsilon\backslash\overline{\mathcal{T}} \\ \label{eq:exEikbc}
v(\mathbf{z})&=\overline{g}(\mathbf{z}), \;\; \mathbf{z} \in \overline{\mathcal{T}}.
\end{align}
Again, $$B(\mathbf{z},\mu)=\sigma_1^{-1}\mathbf{t}_1 \otimes \mathbf{t}_1 +\sigma_2^{-1} \mathbf{t}_2 \otimes \mathbf{t}_2+\mu \mathbf{n}\otimes \mathbf{n}.$$ Since the desired solution, $v$, has been proven to be constant along the normals of the surface, $\Gamma$, $\mathbf{n}\otimes \mathbf{n} \nabla v(\mathbf{z})=0$. Therefore, $\mu$ can be any real number. We let $\mu=1$ in all of our computations.

Approximating the solution to \eqref{eq:exEik}-\eqref{eq:exEikbc} requires the computation of the singular values and vectors of the derivative of the closest point mapping, $ P_\Gamma'$. We defer the discussion of these approximations to Section \ref{sec:PointCloud}. In the last example, we apply our framework to solve an HJB equation with an anisotropic speed function.

\subsubsection{Boundary closure}\label{sec:bdryclosure}

We note that the analytical formulation of the HJB equations does not require boundary conditions on $\partial T_\epsilon$. However, since we are using Cartesian grids we must take careful consideration of the discretization near the boundary, $\partial T_\epsilon^h$. When approximating the partial derivatives of $v$, a neighboring grid node may lie outside of $T_\epsilon^h$. We will call these ghost nodes. In our implementation, we provide a boundary closure procedure to enforce the fact that the solution is a constant along normal function. For each ghost node, we perform the following procedure:
\begin{enumerate}
    \item Project the node into the narrow band.
    \item The value at the ghost node is then calculated by interpolating grid values surrounding the projected point. Formally, we must use an interpolation scheme of order higher to the discretization of the PDE on $T_\epsilon^h$. 
\end{enumerate}

Next, we describe how the boundary closure procedure affects the numerical accuracy. Let $\mathbf{z}_{i}$ be a ghost node, and $\mathbf{z}^\alpha_{i}=\mathbf{z}_{i}-\alpha \mathbf{n}_{i}$ be the projection of the ghost node into $T_\epsilon$. Denote the solution at $\mathbf{z}_i^\alpha$ by $v_{i}^\alpha:=v(\mathbf{z}_{i}^\alpha)$. Suppose that we use a first order scheme to discretize the HJB equation on $T_\epsilon^h$. Then the interpolation used in the approximation of $v_i^\alpha$ should yield at least second order in $h$ accuracy, in order to maintain formally the first order accuracy. This is due to the amplification by a factor of $h^{-1}$ in the finite difference scheme errors of the approximation of the values at the neighboring grid nodes of $\mathbf{z}_i$. 

\subsubsection{Depth of projected points and bounding the thickness of the narrow band}\label{sec:thicknessNB}
When choosing $\alpha$ for the the projected ghost node discussed above, a necessary condition is that the nearby surrounding nodes used to interpolate the value of $v_i^\alpha$ must be in $T_{\epsilon}^h$. In our numerical simulations, we use cubic interpolation to approximate $v({\bf z}^{\alpha})$. Thus, it requires $4^{3}$ nearby grid nodes for three dimensional cases.  It can be shown that if $|d_\Gamma(\mathbf{z}_i)-\alpha|\leq\epsilon-2\sqrt{n}h$, where $n$ is the dimension. Then the inner nodes used to interpolate the value at each ghost node are in $T_{\epsilon}^h$. We suggest to use a projected node as close to the boundary node as possible. Therefore in our numerical simulations, we choose $\alpha=d_\Gamma(\mathbf{z}_i)-(\epsilon-2\sqrt{n}h)$. We discuss the effect of the choice of $\alpha$ in Section \ref{sec:ex1}.

The maximal value of $\epsilon$ is restricted by the curvatures of $\Gamma$. We must have $\epsilon$ smaller than the reach of the surface, which depends on the reciprocal of the maximal curvatures.
The minimal value of $\epsilon$ is determined by the finite difference stencils in approximating the partial derivatives of the PDE and the boundary closure procedure. Both stencils require that the narrow band is sufficiently ``thick" relative to the mesh size $h$ with $\epsilon>2\sqrt{n}h$.

 The resulting numerical discretization of the HBJ equation on $T_\epsilon^h$ is convergent 
 for any $\epsilon$ in the interval described above; i.e. it is convergent for very thin $\mathcal{O}(h)$ narrow bands as well as for thicker one whose widths are independent of $h$.
 We remark that this convergent regime is very different that requiring $\epsilon\sim\mathcal{O}(h^p), 0<p<1$, in the related work of \cite{KublikTsai16}, dealing with singular integrals.

\subsubsection{Approximation errors}
The error computed by the proposed algorithm can be written as the sum of errors corresponding to different approximations:
 \begin{equation}\label{eq:error_sources}
 \mathrm{Error} = E_{model}(\epsilon)+E_{\Delta}(h)+E_{\Gamma}(h)+E_{L_p}(h,\epsilon)\,\, , 
 \end{equation}
 where 
 \begin{itemize}
     \item $E_{model}$ relates to how the surface PDE is approximated in the tubular neighborhood by an extended PDE and the boundary conditions. Our main contribution is in deriving the extension for which $E_{model}\equiv 0$ for any $\epsilon$ that is smaller than the reach of the surface;
     \item $E_{\Delta}$ corresponds to the numerical error for approximating the extended PDE problem. Our extension allows for higher order (in $h$) methods to be applied;
     \item $E_{\Gamma}$ relates to errors in approximating the surface and its geometric properties. 
     This error corresponds to approximation of the closest point mapping to the surface and its curvatures. Since these quantities are computed on based on finite differencing using the grid, the error depends on $h$; 
    \item $E_{L_p}$ is the numerical error in approximating the $L_p$ norm of the computed errors. Our formulation allows for a very accurate approximation to this error term. 
 \end{itemize}

\subsubsection{Application to solving HJB on point clouds}\label{sec:PointCloud}
\begin{figure}
\centering
\includegraphics{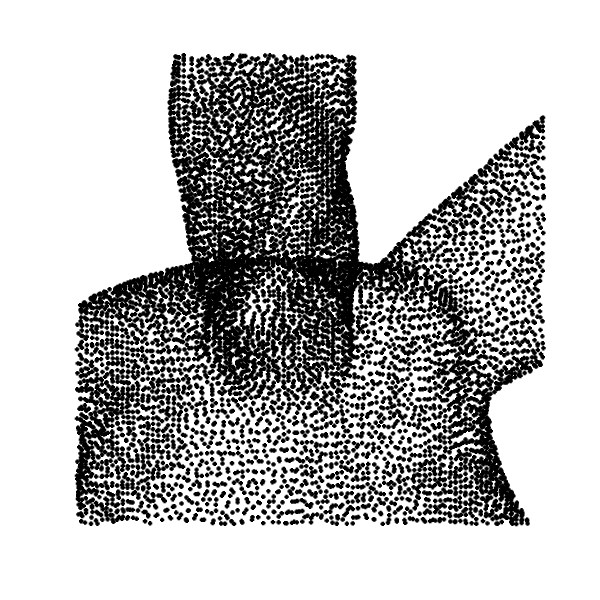}
\caption{Example of a point cloud for the Stanford bunny zoomed in near the top of the head.}
\label{fig:bunnyptcld}
\end{figure}
We now describe how we compute the closest point mapping when the surface is represented as a point cloud. (See Figure \ref{fig:bunnyptcld}.) Denote the point cloud by $\Gamma_K\subset\Gamma$, with $K$ indicating the total number of points in the set. Here, we shall assume that the $K$ points uniformly distribute on the surface. The purpose of the exposition here is to point out a promising application of the method to point cloud data. A full analysis of the algorithm in this area  warrants a separate paper.

We approximate $P_\Gamma$ using the following strategy:
\begin{enumerate}
    \item For each $\mathbf{z}_i \in T_\epsilon^{h}$, estimate $\Gamma_K$ locally on the grid nodes. Call the local surface associated to the point $\mathbf{z}_i,$ $\Gamma_K(\mathbf{z}_i)$.
    \item  Compute the closest point of $\mathbf{z}_i$ on the local surface, $\Gamma_K(\mathbf{z}_i).$
\end{enumerate}

 In our implementation, we use biquadratic interpolation at each $\mathbf{z}_i$ to compute $\Gamma_N(\mathbf{z}_i)$. Another option to locally estimate the surface is least squares as in \cite{LiangZhao2013}. We then use Newton's method to obtain the closest point of each $\mathbf{z}_i$ on  $\Gamma_N(\mathbf{z}_i).$ Denote the closest point mappings by $P_{\Gamma_N}^{\textrm{int}}(\mathbf{z}_i)$. Then on each grid node $\mathbf{z}_i$, the equation is discretized as usual, with $P_\Gamma^\prime(\mathbf{z}_i)$ being approximated by finite differences of $ P^{\textrm{int}}_{\Gamma_N} (\mathbf{z}_i).$ We use a fourth-order finite difference scheme to estimate $P_\Gamma'$ and singular value decomposition to obtain the singular values and singular vectors of $P_\Gamma'$. We show the effect of this approximation procedure in Section \ref{sec:ex1}.

In any case, the term, $E_\Gamma$, in \eqref{eq:error_sources} now depends on $1/K$, which corresponds to the uniform spacing of the points. Therefore, with a fixed point cloud, refinement of Cartesian grids eventually will not further improve the accuracy in the numerical solutions when compared to the solution on the idealized smooth surface sampled by the point cloud. In Section \ref{sec:samplingdensities}, we present some results revealing the effect of different surface sampling density.

If noise is introduced to the point cloud, and if the amount of perturbation in the original "surface normals" is significant, more sophisticated surface fitting is required; e.g. adopting a similar strategy proposed in \cite{LiangZhao2013}.  We will investigate this important issue in a future manuscript. Nevertheless, in Section \ref{sec:samplingdensities}, we present some results revealing the effect of noise in the point cloud.

\subsection{Example 1.1}\label{sec:ex1}
First, we present a numerical convergence study. We approximate the solution to \eqref{eq:exEik}-\eqref{eq:exEikbc}, where $\Gamma$ is a sphere centered at $(0,0,0),$ with radius, $r_0=0.5$, and $\mathcal{T}$ is a point on the sphere. The exact singular values and vectors in $B(\mathbf{z},1
)$ are used. In the case of a sphere \begin{equation}\label{eq:sigs}\sigma_1=\sigma_2=1-\frac{r(\mathbf{z})-r_0}{r(\mathbf{z})}=\frac{r_0}{r(\mathbf{z})},\end{equation} where $r(\mathbf{z})$ is the radius of the sphere going though $\mathbf{z}$ centered at $(0,0,0)$. Figure \ref{fig:sphere} shows the distance function to a point on the sphere for two different view points.

Since the solution is constant along the normals of $\Gamma$, we can easily estimate the $L_1$ error. Using the formulation in \cite{KublikTsai16}, the estimation of the $L_1$ error is given by the following formula:
\begin{align*}
||u-u^h||_{L_1(\Gamma)}\approx \sum_{\mathbf{z}_i \in T_\epsilon^h} (u(P_\Gamma \mathbf{z}_i)-v_i^h)K_\epsilon(d_\Gamma(\mathbf{z}_i))J(\mathbf{z}_i)h^3,
\end{align*}
where $u$ is the exact solution and $u^h$ is the approximated solution on $\Gamma$ using the approximate solution, $v_i^h$, of the extended equation with grid size $h$.
Here, $$K_\epsilon(d_\Gamma(\mathbf{z}_i))=A *\text{exp}\bigg (\frac{2}{d_\Gamma(\mathbf{z}_i)^2-1} \bigg) $$ where $A =7.513931532835806$ 
and 
$$J(\mathbf{z}_i)=\sigma_1\sigma_2,$$ where $\sigma_1,\sigma_2$ are given in \eqref{eq:sigs}.

Because all the characteristics of the solution emanate from the source point on the sphere, the errors at the source point will propagate through the solution in the entire computational domain. Therefore, we initialize the solution near $\overline{\mathcal{T}}$ with the exact solution if $u(\mathbf{z}) <0.2$ The $L_1$ and $L_\infty$ errors are reported in Table \ref{tab:converg}. We see that first order convergence is achieved in the $L_1$ error. However, because of the singularity of the solution at the point opposite the source point on the sphere, the order in the $L_\infty$ error is less than one. The results verify that the boundary closure procedure does not influence the overall order of the scheme.

 We report the number of iterations it takes for the $L_\infty$ norm of the difference in the solution between successive iterations is less than $1.0e-13$. One iteration includes one sweep of the grid in each of the eight sweeping directions. The timing for the numerical experiments scale linearly to the number of grid nodes in the narrow band $T_\epsilon$ and the number of iterations used in the Lax-Friedrichs scheme. The computations were performed on a MacBook laptop with a 1.6 GHz Intel CPU on a single core using Julia.  One Lax-Friedrichs iteration took on average 0.34260 seconds for the case $N=101$ in Table 1. We note that our code was not optimized for efficiency.

\renewcommand{\arraystretch}{1.5}
\begin{table}
\caption{Errors and orders of accuracy for the distance function on a sphere, computed by the first order Lax-Friedrichs sweeping algorithm. We use $h=2/(N-1)$ and $\epsilon = 4h$.}

\begin{center}

\begin{tabular}{|r|r|r|r|r|r|r|}
\hline
$N$ & number of points in $T_\epsilon$ & $L_1$ error & order     & $L_\infty$ error & order    & iterations \\ \hline
101 & 63302                        & 0.02319     &           & 0.03741          &          & 32         \\ \hline
201 & 252310                       & 0.01138     & 1.0345 &  0.02213          & 0.76291 & 49         \\ \hline
301 & 566458                       & 7.5443e-3   & 1.0158    & 0.01611          & 0.78624  & 65         \\ \hline
401 & 1571974                      & 5.6562e-3   & 1.0041    & 0.01280          & 0.79978  & 80         \\ \hline
\end{tabular}


\end{center}
\label{tab:converg}
\end{table}

Another advantage of our setup is that we can use existing high order methods to compute the solution to HJB equations on surfaces. In Table \ref{tab:convergehighorder}, we present the $L_1$ and $L_\infty$ errors and orders for the third order method given in \cite{Zhangetal06}. We initialize the solution near $\overline{\mathcal{T}}$ to the exact solution if $u(\mathbf{z}) < 0.2.$ We see that third order is achieved in the $L_1$ error. Again, because of the singularity in the solution at the point opposite the source point on the sphere, the order in the $L_\infty$ error is first order.

\renewcommand{\arraystretch}{1.5}
\begin{table}
\caption{Errors and orders of accuracy for the distance function on a sphere, computed by the third order Lax-Friedrichs sweeping algorithm. We use $h=2/(N-1)$ and $\epsilon = 11h$.}

\begin{center}

\begin{tabular}{| r|r|r|r|r|r|r|}
 \hline

 $N$ & number of points in $T_\epsilon$ & $L_1$ error & order & $L_\infty$ error &  order & iterations\\ 
 \hline
101 & 183810 & 5.1129e-4 &  & 0.02779 &  & 65            \\ \hline

201 & 702626 & 6.6194e-5 & 2.9706 & 0.01137 & 1.2987 & 92            \\ \hline

301 & 1566014 & 2.0373-5 & 2.9182& 7.1199e-3 & 1.1587 & 100
\\ \hline

401 & 2776370 & 8.5992e-6 & 3.0069 & 5.1806e-3 & 1.1085 & 119
\\\hline


\end{tabular}

\end{center}
\label{tab:convergehighorder}
\end{table}

\renewcommand{\arraystretch}{1.5}

\subsection{Example 1.2}\label{sec:samplingdensities}
In this example, we compute the distance function on the sphere when the sphere is represented as a point cloud. We show the effect of our method when using different uniform samplings of the sphere.\footnote{The uniform samplings of the sphere were computed using the code provided at \url{https://github.com/AntonSemechko/S2-Sampling-Toolbox}. } The results are tabulated in Table~\ref{tab:covergepointcloud} when the sampling of the sphere has $400,800,$ and $1600$ points. We again initialize the solution by exact solutions around the same neighborhood of $\overline{\mathcal{T}}$ as in Section \ref{sec:ex1}.   By comparing the $L_1$ error from Table \ref{tab:converg} for $N=101$ and the $L_1$ error in Table \ref{tab:covergepointcloud} for $N=101$ and no noise, it appears that the dominating source of error is the numerical discretization of the PDE. As the grid is refined, we see that it appears the error related to approximating the surface begins to dominate. This is apparent by comparing the $L_1$ error from Table \ref{tab:converg} for $N=301$ and the $L_1$ error from Table \ref{tab:covergepointcloud} for $N=301$ with no noise. Notice as the point cloud becomes more dense and the accuracy of the surface approximation increases, it appears the dominating error is again due to the numerical discretization of the PDE.

In Table \ref{tab:covergepointcloud}, we also consider when noise is introduced to the point clouds of different sampling densities. The noise is applied by adding $\delta*(-1+2*\text{rand}())/2$ to each point in the point clouds, where $\delta = 0.001, 0.005$ and $\text{rand()}$ is a uniformly distributed random number in $(0,1)$. We can see if the noise is too large the error begins to deteriorate and we need a more sophisticated surface fitting algorithm in our method.

\begin{table}
\caption{$L_1$ errors for the first order Lax-Friedrichs scheme using different densities of uniformly sampled point cloud representations of the sphere and different noises applied to the point clouds. The noise is applied by adding $\delta*(-1+2*\text{rand}())/2$ to each point in the point clouds.  We use $h=2/(N-1)$, $\epsilon=4h$.}

\begin{center}

\begin{tabular}{|c|r|r|r|r|}
\hline
\multicolumn{2}{|l|}{}                                 & \multicolumn{3}{c|}{ $L_1$ errors using $\Gamma_K$} \\ \hline
\multicolumn{1}{|c|}{$\delta$} & \multicolumn{1}{c|}{N}   & $K=400$          & $K=800$          & $K=1600$         \\ \hline
\multirow{3}{*}{0}          & 101                      & 0.02332      & 0.02317      & 0.02318      \\ \cline{2-5} 
                            & 201                      & 0.01172      & 0.01144      & 0.01139      \\ \cline{2-5} 
                            & 301                      & 8.0413e-3    & 7.6406e-3    & 7.5649e-3    \\ \hline
\multirow{3}{*}{0.001}      & 101                      & 0.02349      & 0.02321      & 0.02313      \\ \cline{2-5} 
                            & \multicolumn{1}{l|}{201} & 0.01192      & 0.01153      & 0.01148      \\ \cline{2-5} 
                            & \multicolumn{1}{l|}{301} & 8.2866e-3    & 7.7936e-3    & 7.75321e-3   \\ \hline
\multirow{3}{*}{0.005}      & 101                      & 0.02464      & 0.02364      & 0.02404      \\ \cline{2-5} 
                            & \multicolumn{1}{c|}{201} & 0.01389      & 0.01339      & 0.01531      \\ \cline{2-5} 
                            & \multicolumn{1}{c|}{301} & 0.01082      & 0.01265      & 0.01279      \\ \hline
\end{tabular}

\end{center}
\label{tab:covergepointcloud}
\end{table}

\subsection{Example 1.3}
Next, we study the affect that the depth of projected ghost nodes has on the overall error for the distance function on the sphere. The set up is the same as in the previous section. Here, we choose a $101^3$ sized grid with $\epsilon=11h$ where $h=2/100$. We estimate the $L_1$ error for when the boundary closure procedure is carried out at four different depths. Recall that a ghost node, $\mathbf{z}_i$, is outside of $T_\epsilon^h$ and is projected into $T_\epsilon$ along the normal of the surface at that point, i.e., 
$$\mathbf{z}_i \mapsto \mathbf{z}_i-\alpha \mathbf{n}_i.$$ Table \ref{tab:alphadepth} shows the $L_1$ error for the depths: $$\alpha=d(\mathbf{z})-(10-2\sqrt{3})h, d(\mathbf{z})-3h, d(\mathbf{z}), d(\mathbf{z})+3h.$$ We can see that the error the solution is not very sensitive to the choice of depth. In our simulations, we chose $\alpha=\epsilon -2\sqrt{3}h$.

\begin{table}
\begin{center}

\caption{Comparison of different choices of $\alpha$ for the sphere on a $101^3$ grid. Here, $\epsilon=11h$ and $h=2/100.$}
\begin{tabular}{| r|r|r|r|r |} 
 \hline

$\alpha$ & $d(\mathbf{z})-(11-2\sqrt{3})h$& $d(\mathbf{z})-3h$ & $d(\mathbf{z})$ &$d(\mathbf{z})+3h$\\
\hline
$L_1$ error & 0.02298 & 0.02303 & 0.02320 & 0.02349 \\\hline

\end{tabular}
\label{tab:alphadepth}
\end{center}

\end{table}
\subsection{Example 1.4}
Finally, we compare the following: (1) our method on the sphere, using exact singular values and vectors as in the set up for Table \ref{tab:converg}; (2) our method with the point cloud procedure described in \ref{sec:PointCloud} and finite differences to estimate $P_\Gamma'$; (3) the method in \cite{MemoliSapiro01}. Since method (3) requires $\epsilon=Ch^\gamma$ where $C=\sqrt{3}$ and  $\gamma \in (0,1)$, to fairly compare the error of the methods, we increase the width of the narrow band to $h=2h^{0.7},$  which is used in the convergence analysis of (3) in \cite{MemoliSapiro01}. We initialize the experiments using methods (1), (2), and (3) with exact values given in a box of length $h$ around the source point/points in the initial sets. We also are not able to compute the $L_1$ error for method (3) since the solution is not constant along the normals of $\Gamma$. Therefore, we can only report the $L_\infty$ error, and we use trilinear interpolation to approximate the solutions on $\Gamma$ for (3). The errors are reported in Table \ref{tab:comparison}. We can also see that estimating $P_\Gamma$ from a point cloud 
and $P_\Gamma'$ from finite differences does not greatly influence the $L_1$ error of the method. 

\renewcommand{\arraystretch}{1.5}
\begin{table}
\caption{Comparison of methods on a $101^3$ grid: (1) Our method using exact closest point mapping and exact singular values and vectors; (2) Our method using closest point mapping sampled from a an equally distributed point cloud of 400 points and finite differences to compute singular values and vectors; (3) method in \cite{MemoliSapiro01}. Here $\epsilon=2h^{0.7}$ and $h=2/100$.}
\begin{center}
\begin{tabular}{ |r|r|r|r |} 
\hline

Method  & (1) & (2) & (3) \\\hline

 \hline
$L_\infty$ error & 0.03226 & 0.03334 & 0.17072 \\\hline

$L_1$ error      & 0.07787 & 0.07667 & \\\hline

\end{tabular}
\label{tab:comparison}
\end{center}

\end{table}

\subsection{Example 2}

\begin{figure}
\centering
\includegraphics{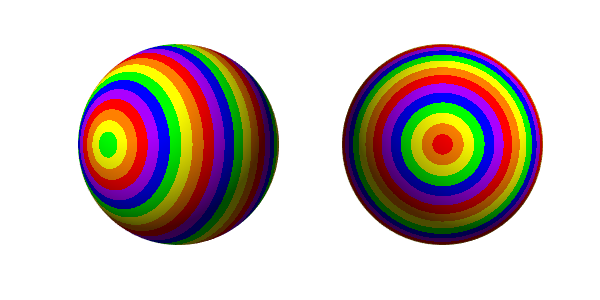}

\caption{Two view points of the distance function on a sphere.}
\label{fig:sphere}
\end{figure}

\begin{figure}
\centering
\includegraphics{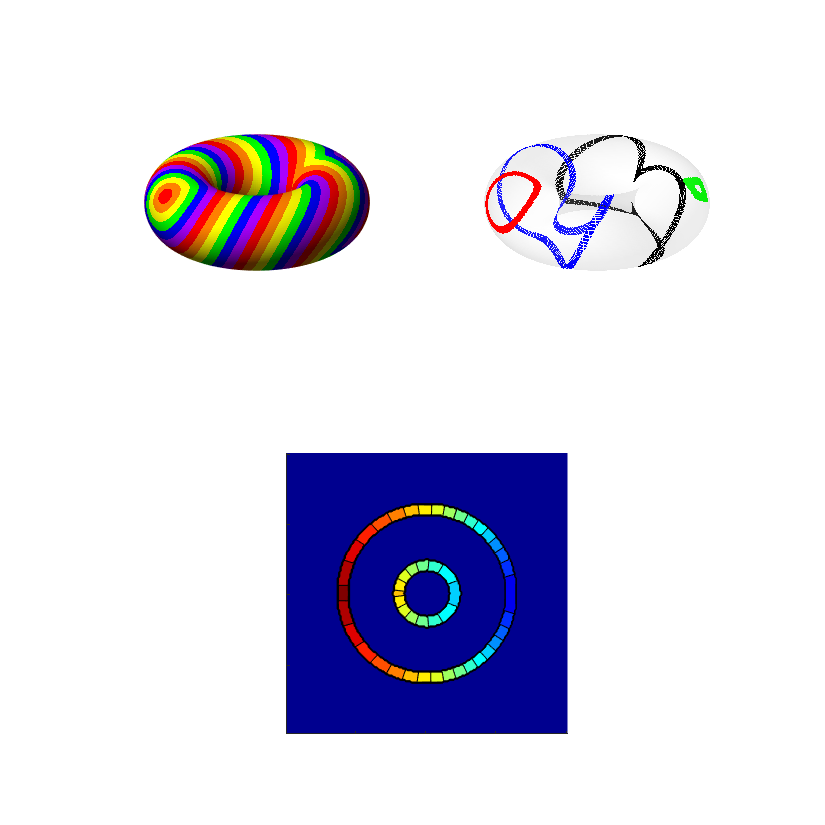}

\caption{Top left: Distance function on a torus. Top right: Points from the corresponding point cloud whose distance from the source point lies in the interval (0.09,1.1), (0.29,3.1), (0.49,5.1), or (0.69,7.1). Bottom: Solution slice at z=.5 with contours showing that the solution is indeed constant along normal.}
\label{fig:torus}
\end{figure}

\begin{figure}
\centering
\includegraphics{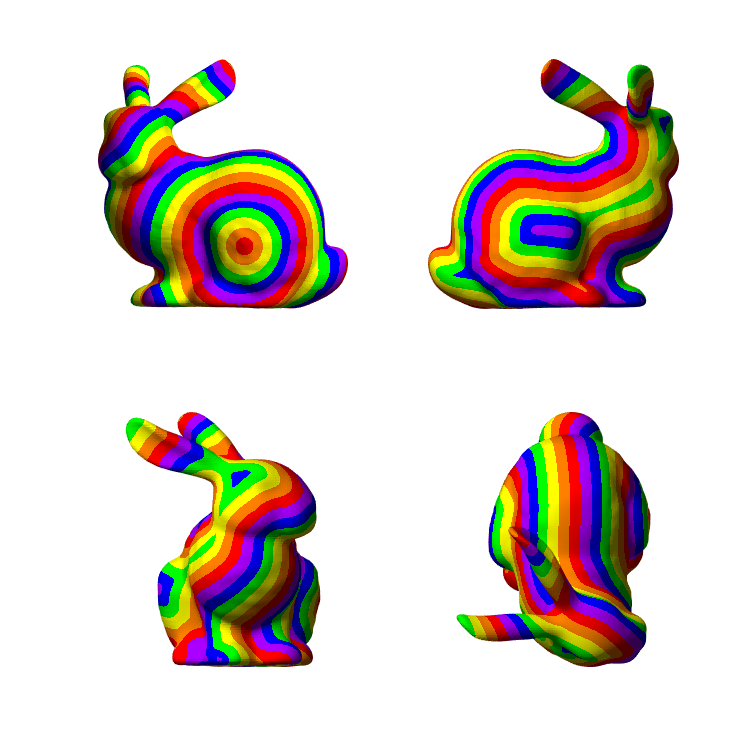}
\caption{Distance function on the Stanford bunny with four viewpoints.}
\label{fig:bunny}
\end{figure}

\begin{figure}
\centering
\includegraphics{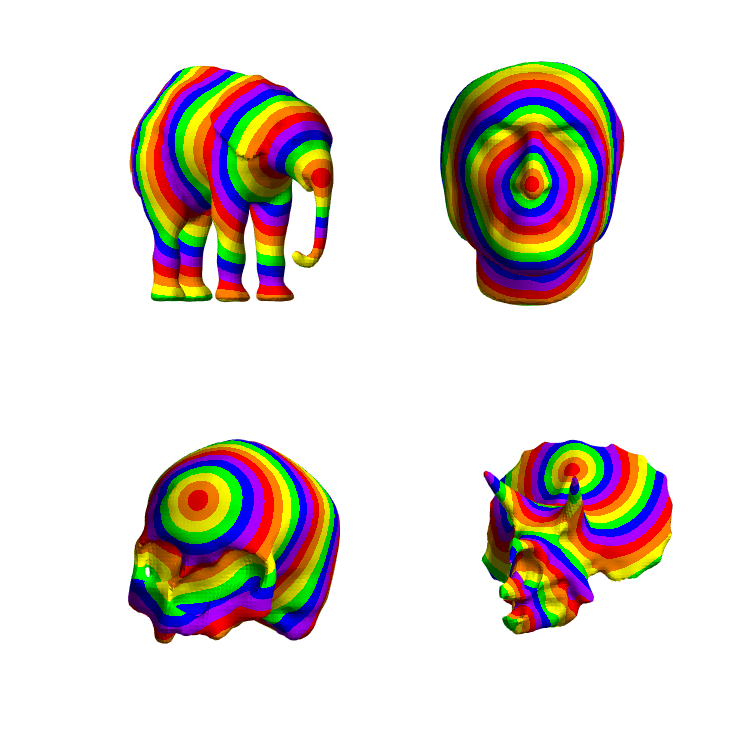}
\caption{Distance function on a elephant, mask, human skull, and dinosaur skull.}
\label{fig:otherfigs}
\end{figure}

In Figures \ref{fig:torus}, \ref{fig:bunny}, and \ref{fig:otherfigs}, the distance function to a source point is shown on various surfaces. The contours shown are equally spaced and parallel. The solutions for the torus and bunny are computed on a $201^3$ grid, and a $101^3$ grid is used for the elephant, mask, human skull, and dinosaur skull. All computations use a narrow band width of $\epsilon=4h$ and point cloud representations for the surfaces. The number of points in the point clouds of the torus\footnote{The point cloud for the torus was generated using the standard parametrization of a torus.}  and bunny\footnote{The point cloud for the Stanford bunny is generated from a refinement of the triangulated Stanford bunny from https://casual-effects.com/data/ \cite{McGuire2017}.}  are 178,350 and 228,096, respectively. For the elephant, mask, human skull, and dinosaur skull\footnote{The point clouds for the surfaces in Figure \ref{fig:otherfigs} were generated from the triangulations downloaded at \url{https://www.myminifactory.com/scantheworld/}.}, we used point clouds with 65,292, 1,199,988, 234,618, and 139,491 points, respectively.  In Figure \ref{fig:torus}, a cross section of the solution is displayed to show that the solution is indeed constant along the normals of the surface. Our new framework also allows us to sort point clouds. Figure \ref{fig:torus} displays level ``belts'' of the point cloud, i.e., points in the point cloud whose distance from the source point lies in given interval. 

Another advantage of our formulation is that when we compute the characteristics, known as geodesics, of the Eikonal equation via the extended Eikonal equation on the narrowband, the paths remain on the surface if the initial point lies on $\Gamma$.  Since the solution to \eqref{eq:exEik}-\eqref{eq:exEikbc} is constant along the normals of $\Gamma$, the gradient always belongs to the tangent spaces of $\Gamma$ or its parallel surfaces, $\Gamma_\eta$. We have that $B(\mathbf{x},\mu) \mathbf{a}=\mathbf{a}$ for $\mathbf{x} \in \Gamma$ and $\mathbf{a} \in A_\mathbf{x}=T_{\mathbf{x}}\Gamma \cap S^{n-1}$ since $\sigma_1=\sigma_2=1$. Thus, if the initial point is $\mathbf{z} \in \Gamma$, the geodesic on the surface can be obtained by solving the dynamical system \begin{align}
\frac{d{\mathbf{y}}}{dt}(t)&=\frac{-\nabla v(\mathbf{y}(t))}{||\nabla v(\mathbf{y}(t))||}, \; t>0 \\ 
{\mathbf{y}}(0)&=\mathbf{z}, \mathbf{z}\in \Gamma .
\end{align}

\begin{figure}
\centering
\includegraphics{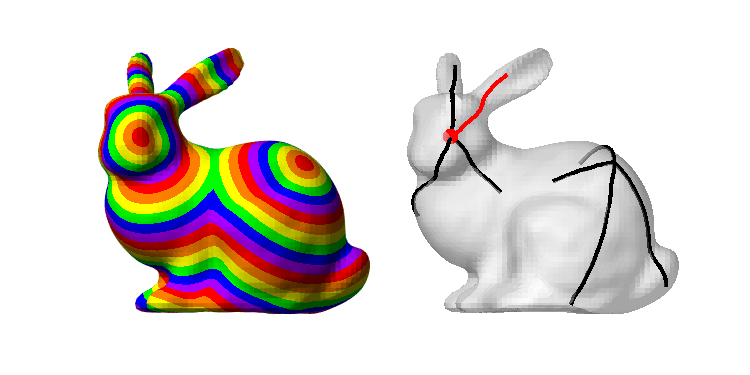}
\caption{Left: The distance function for two source points. Right: Eight geodesics computed on the Stanford bunny.}
\label{fig:bunnygeo}
\end{figure}
In Figure \ref{fig:bunnygeo}, we consider the case when we have two source points and display the solution and some geodesics. Recall that in Theorem \ref{radial0} we showed that paths with equivalent initial points remain equivalent for all time. We show a visualization of this property in Figure \ref{fig:parallelpaths}. In the next section we will compare geodesics of the HJB equation given different anisotropic speed functions. 
\begin{figure}
\centering
\includegraphics{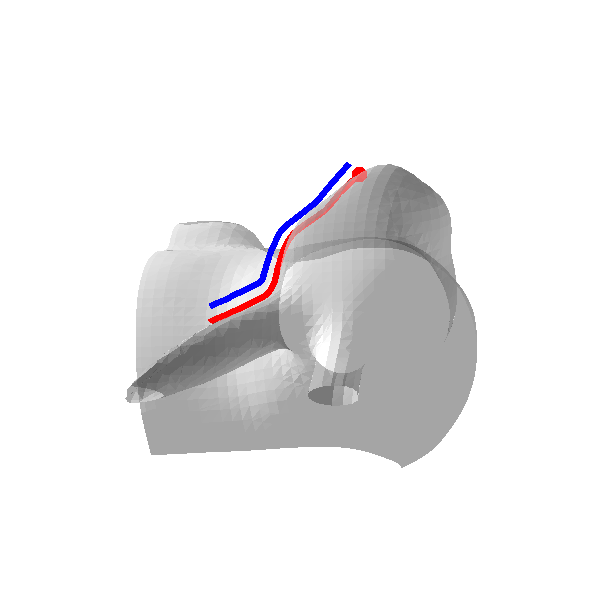}
\caption{Verification of Theorem \ref{radial0}, i.e., paths with equivalent starting points stay equivalent for all time. We plot the red geodesic from Figure \ref{fig:bunnygeo}, and the initial point of the blue parallel path is offset by 0.015 along the normal of the bunny at that point.}
\label{fig:parallelpaths}
\end{figure}
\subsection{Example 3}

\begin{figure}
\centering
\includegraphics{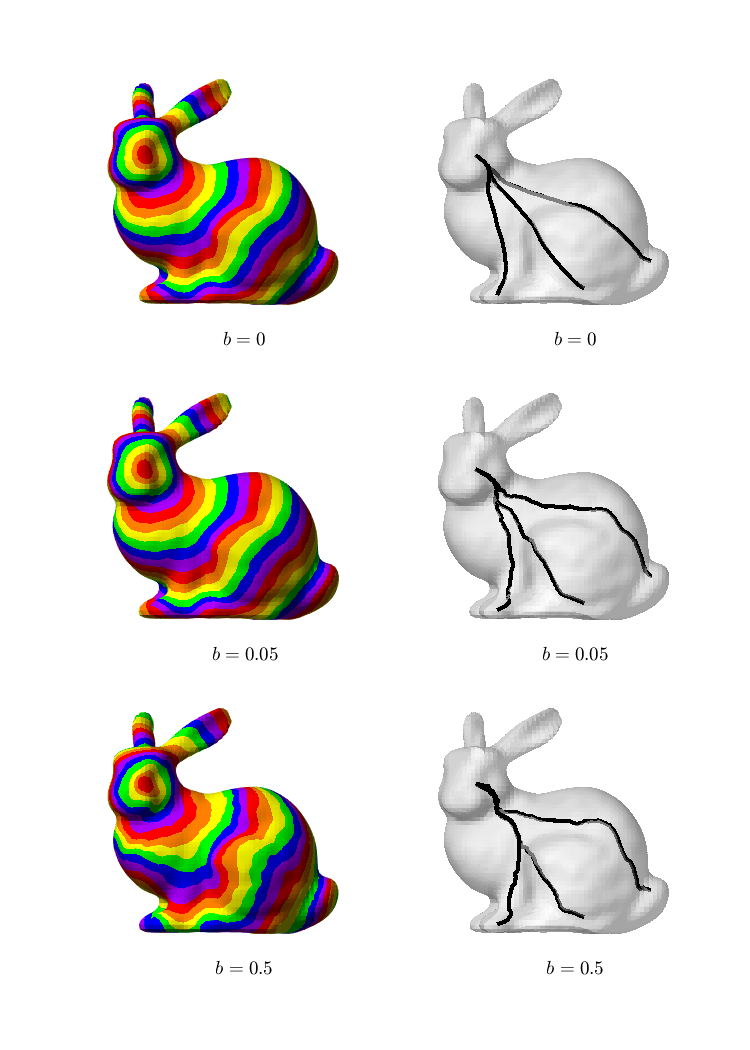}
\caption{Anisotropic example for the curvature based speed function showing the contours and geodesics for values of $b=0,0.05,0.5$.}
\label{fig:bunnyani}
\end{figure}

Finally, we test implement the framework on an anisotropic speed function. We solve 

\begin{align} \label{HJBmanex}
\min_{\mathbf{a}\in A_\mathbf{z}} \bigg\{\nabla v(\mathbf{z}) \cdot \overline{f}( \mathbf{z},\mathbf{a})B(\mathbf{z},\mu)\mathbf{a} +1 \bigg \}&=0 , \; \; \mathbf{z} \in T_\epsilon\backslash \overline{\mathcal{T}},\\
 \label{HJBmanexbc} 
 v(\mathbf{z})&=\overline{g}(\mathbf{z}), \; \; \mathbf{z}\in \overline{\mathcal{T}},
\end{align}
where $f: \Gamma\times A \to \mathbb{R}$ is a curvature based speed function on the surface. We use the speed function proposed in \cite{Seonetal12}. The speeds are fast on low curvature areas of the surface and slow on high curvature areas. This means that the corresponding ``shortest" paths traverse areas of the lowest curvatures. These paths are typically longer than the geodesics of the surface.  

The normal curvature of $\Gamma$ at $\mathbf{x}$ in the direction $\mathbf{a}$ is given by $$\kappa_\mathbf{a}(\mathbf{x})=\mathbf{a}^T\left[ \begin{array}{cc}
   \kappa_1 & 0  \\
   0 & \kappa_2 \\
 
  \end{array}  \right]\mathbf{a}$$
  where $\kappa_1$ and $\kappa_2$ are the principal curvatures of $\Gamma$ at $\mathbf{x}$.
Then the curvature-minimizing  speed function is given by
   $$f(\mathbf{x},\mathbf{a})=\mbox{exp}(-b ||\kappa_\mathbf{a}(\mathbf{x})||),$$
where $b$ is a positive constant. When $b=0$, \eqref{HJBmanex} reduces Eikonal equation on the surface. A larger value of $b$ corresponds to a greater difference in speeds between areas of low and high curvature. We display the solutions for varying $b$ values to a source point on the bunny in Figure \ref{fig:bunnyani}. 

In this example, the set $V(\mathbf{x})=\{f(\mathbf{x},\mathbf{a}) \mathbf{a} \; | \; \mathbf{a} \in A_\mathbf{z}\}$  is not neccesarily convex for all $\mathbf{x}$ in $\Gamma.$ Therefore, an optimal control may not exist. However, we can still extract suboptimal paths, called anisotropic geodesics, whose total cost is arbitrarily close to the value function at the starting point. Just as in the isotropic case, the anisotropic geodesics computed from the extended HJB equation on the narrow band will lie on the surface since $B(\mathbf{x}, \mathbf{a})\mathbf{a}=\mathbf{a}$ for $\mathbf{x} \in \Gamma$. The paths can easily be extracted because we compute the minimizing control at each grid node when solving \eqref{HJBmanex}-\eqref{HJBmanexbc}. Once we have the optimal (or suboptimal) control values, $\mathbf{a}^*(\cdot)$, we then solve dynamical system \begin{align*}
\frac{d{\mathbf{y}}}{dt}(t)&={f}({\mathbf{y}}(t),\mathbf{a}^*(t))\mathbf{a}^*(t), \; t>0 \\ 
{\mathbf{y}}(0)&=\mathbf{z},\; \mathbf{z} \in \Gamma.
\end{align*} We plot three anisotropic geodesics for each $b$ value in Figure \ref{fig:bunnyani}. We can see as $b$ increases the Euclidean distances of the paths are longer, and the paths start to seek out the narrow valleys of the bunny.

\section{Summary and conclusion}
In this paper, we presented a new formulation to compute solutions of a class of HJB equations on smooth hypersurfaces. We extend the HJB equation's associated optimal control problem from the surface to an equivalent problem defined in a sufficiently "thin" narrow band around the surface, in the embedding Euclidean space. The extension was done so that the resulting value function is the constant normal extension of the value function defined in the optimal control problem on the surface. We presented the formulations for the general anisotropic equation and showed that the viscosity solution of the HJB equation on the narrow band is the constant normal extension of the viscosity solution on the surface, independently of the optimal control problems. We also presented the isotropic case and showed there is no need to restrict the control space in order to have an equivalent formulation.  The proposed approach is independent of surface representation and can be used to compute and define optimal control problems on uniformly distributed point clouds sampled from some smooth surface. It is  also clear that our proposed extension approach can be applied in to time dependent equations arising from the finite horizon control problems. Together with \cite{ChuTsai18}, our extension approach provides a good framework for solving to mean field games to high order accuracy on complicated and non-parametrized surfaces.

With this new framework, we are able to use Cartesian grids and the existing methods for computing solutions to HJB equations in Euclidean space on a very thin narrow band to solve HJB equations on surfaces coupled with  a simple boundary closure procedure. Our numerical examples verify that the boundary closure procedure does not influence the overall order of the method. We also show that our formulation allows one to easily solve the surface HJB equations to high order accuracy.

\section*{Acknowledgment}
The authors are supported partially by National Science Foundation Grant DMS-1720171. Tsai also thanks National Center for Theoretical Study, Taipei for hosting his visits, in which some of the ideas presented in this paper originated.

\bibliographystyle{abbrv}

\bibliography{HJBManifolds}

\end{document}